\def\op{{\rm op}}
\def\ddA{{\rm A}}
\def\Br{{\rm Br}}
\def\SBr{{\rm SBr}}
\def\BrM{{\rm BrM}}
\def\ddB{{\rm B}}
\def\ddC{{\rm C}}
\def\ddD{{\rm D}}
\def\ddE{{\rm E}}
\def\ddI{{\rm I}}
\def\ddF{{\rm F}}
\def\ddG{{\rm G}}
\def\ddH{{\rm H}}
\def\N{{\mathbb N}}
\def\hE{{\hat E}}
\def\eps{{\epsilon}}
\def\SBrM{{\rm SBrM}}
\def\lijntje{\vrule height2.4pt depth-2pt width0.5in}
\def\vlijntje{\vrule height0.45in depth0.4pt width0.4pt}
\def\vlijn{\buildrel {\hbox to 0pt{\hss$\textstyle\circ$\hss}}\over\vlijntje}
\def\dlijntje{{\vrule height2pt depth-1.6pt
width0.5in}\llap{\vrule height4pt depth-3.6pt width0.5in}}
\def\vtriple#1\over#2\over#3{\mathrel{\mathop{\kern0pt #2}\limits_{\hbox
to 0pt{\hss$#1$\hss}}^{\hbox to 0pt{\hss$#3$\hss}}}}
\def\rvtriple#1\over#2\over#3{\mathrel{\mathop{\kern0pt #2}\limits_{\hbox
to 0pt{\hss$#3$\hss}}^{\hbox to 0pt{\hss$#1$\hss}}}}
\def\tlijntje{{\vrule height1.7pt depth-1.3pt
width0.5in}\llap{\vrule height3.0pt depth-2.6pt width0.5in}\llap{\vrule height4.3pt depth-3.9pt width0.5in}
}
\def\Dm{\vtriple{\scriptstyle n+1}\over\circ\over{}\kern-1pt\lijntje\kern-1pt
\vtriple{\scriptstyle{n}}\over\circ\over{}
\cdots\cdots\vtriple{\scriptstyle 4}\over\circ\over{}\kern-1pt\lijntje\kern-1pt
\vtriple{\scriptstyle 3}\over\circ\over{\buildrel
{\scriptstyle 2}\over\vlijn}\kern-1pt\lijntje\kern-1pt
\vtriple{1}\over\circ\over{}\kern-1pt}
\def\Dn{\vtriple{\scriptstyle n}\over\circ\over{}\kern-1pt\lijntje\kern-1pt
\vtriple{\scriptstyle{n-1}}\over\circ\over{}
\cdots\cdots\vtriple{\scriptstyle 4}\over\circ\over{}\kern-1pt\lijntje\kern-1pt
\vtriple{\scriptstyle 3}\over\circ\over{\buildrel
{\scriptstyle 2}\over\vlijn}\kern-1pt\lijntje\kern-1pt
\vtriple{1}\over\circ\over{}\kern-1pt}
\def\En{\vtriple{\scriptstyle n}\over\circ\over{}\kern-1pt\lijntje\kern-1pt
\vtriple{\scriptstyle{n-1}}\over\circ\over{}
\cdots\cdots\vtriple{\scriptstyle 5}\over\circ\over{}\kern-1pt\lijntje\kern-1pt
\vtriple{\scriptstyle 4}\over\circ\over{\buildrel
{\scriptstyle 2}\over\vlijn}\kern-1pt\lijntje\kern-1pt
\vtriple{\scriptstyle 3}\over\circ\over{}\kern-1pt\lijntje\kern-1pt
\vtriple{\scriptstyle 1}\over\circ\over{}\kern-1pt}
\def\An{\vtriple{\scriptstyle n}\over\circ\over{}\kern-1pt\lijntje\kern-1pt
\vtriple{\scriptstyle{n-1}}\over\circ\over{}\kern-1pt\lijntje\kern-1pt
\vtriple{\scriptstyle n-2}\over\circ\over{}
\cdots\cdots
\vtriple{\scriptstyle 2}\over\circ\over{}\kern-1pt\lijntje\kern-1pt
\vtriple{\scriptstyle 1}\over\circ\over{}\kern-1pt}
\def\Cn{\vtriple{\scriptstyle n-1}\over\circ\over{}
\kern-1pt\lijntje\kern-1pt\vtriple{\scriptstyle{n-2}}\over\circ\over{}
\cdots\cdots
\vtriple{\scriptstyle 2}\over\circ\over{}
\kern-1pt\lijntje\kern-1pt\vtriple{\scriptstyle 1}\over\circ\over{}
\kern-4pt{\dlijntje \kern -25pt<}\kern10pt
\vtriple{\scriptstyle 0}\over\circ\over{}\kern-1pt}
\def\Ct{\vtriple{\scriptstyle 2}\over\circ\over{}
\kern-1pt\lijntje\kern-1pt\vtriple{\scriptstyle 1}\over\circ\over{}
\kern-4pt{\dlijntje \kern -25pt<}\kern12pt
\vtriple{\scriptstyle 0}\over\circ\over{}\kern-1pt
}
\def\Bn{\vtriple{\scriptstyle n-1}\over\circ\over{}
\kern-1pt\lijntje\kern-1pt\vtriple{\scriptstyle{n-2}}\over\circ\over{}
\cdots\cdots
\vtriple{\scriptstyle 2}\over\circ\over{}
\kern-1pt\lijntje\kern-1pt\vtriple{\scriptstyle 1}\over\circ\over{}
\kern-4pt{\dlijntje \kern -25pt>}\kern10pt
\vtriple{\scriptstyle 0}\over\circ\over{}\kern-1pt}
\def\Bt{\vtriple{\scriptstyle 2}\over\circ\over{}
\kern-1pt\lijntje\kern-1pt\vtriple{\scriptstyle 1}\over\circ\over{}
\kern-4pt{\dlijntje \kern -25pt>}\kern12pt
\vtriple{\scriptstyle 0}\over\circ\over{}\kern-1pt}
\def\Es{\vtriple{\scriptstyle 6}\over\circ\over{}\kern-1pt\lijntje\kern-1pt
\vtriple{\scriptstyle 5}\over\circ\over{}\kern-1pt\lijntje\kern-1pt
\vtriple{\scriptstyle 4}\over\circ\over{\buildrel
{\scriptstyle 2}\over\vlijn}\kern-1pt\lijntje\kern-1pt
\vtriple{3}\over\circ\over{}\kern-1pt\lijntje\kern-1pt
\vtriple{\scriptstyle 1}\over\circ\over{}\kern-1pt}
\def\Ff{
\vtriple{\scriptstyle 1}\over\circ\over{}
\kern-1pt\lijntje\kern-1pt\vtriple{\scriptstyle 2}\over\circ\over{}
\kern-4pt{\dlijntje \kern -25pt<}\kern10pt
\vtriple{\scriptstyle 3}\over\circ\over{}\kern-1pt\lijntje\kern-1pt
\vtriple{\scriptstyle 4}\over\circ\over{}
\kern-1pt}
\def\Ht{
\vtriple{\scriptstyle 1}\over\circ\over{}
\kern-1pt\overset{5}{\lijntje}\kern-1pt\vtriple{\scriptstyle 2}\over\circ\over{}
\kern-1pt\lijntje\kern-1pt
\vtriple{\scriptstyle 3}\over\circ\over{}\kern-1pt}
\def\Hf{
\vtriple{\scriptstyle 1}\over\circ\over{}
\kern-1pt\overset{5}{\lijntje}\kern-1pt\vtriple{\scriptstyle 2}\over\circ\over{}
\kern-1pt\lijntje\kern-1pt
\vtriple{\scriptstyle 3}\over\circ\over{}\kern-1pt\lijntje\kern-1pt
\vtriple{\scriptstyle 4}\over\circ\over{}
\kern-1pt}
\def\In{
\vtriple{\scriptstyle 0}\over\circ\over{}
\kern-1pt\overset{n}{\lijntje}\kern-1pt\vtriple{\scriptstyle 1}\over\circ\over{}
\kern-1pt}
\def\Gt{
\vtriple{\scriptstyle 0}\over\circ\over{}
\kern-4pt{\tlijntje\kern -25pt<}\kern 10pt\vtriple{\scriptstyle 1}\over\circ\over{}
\kern-1pt}
\def\EBn{\vtriple{\scriptstyle n-1}\over\circ\over{}
\kern-1pt\lijntje\kern-1pt\vtriple{\scriptstyle{n-2}}\over\circ\over{\buildrel
{\scriptstyle -1}\over\vlijn}\cdots\cdots
\vtriple{\scriptstyle 2}\over\circ\over{}
\kern-1pt\lijntje\kern-1pt\vtriple{\scriptstyle 1}\over\circ\over{}
\kern-4pt{\dlijntje \kern -25pt<}\kern12pt
\vtriple{\scriptstyle 0}\over\circ\over{}\kern-1pt}
\def\Cn{\vtriple{\scriptstyle n-1}\over\circ\over{}
\kern-1pt\lijntje\kern-1pt\vtriple{\scriptstyle{n-2}}\over\circ\over{}
\cdots\cdots
\vtriple{\scriptstyle 2}\over\circ\over{}
\kern-1pt\lijntje\kern-1pt\vtriple{\scriptstyle 1}\over\circ\over{}
\kern-4pt{\dlijntje \kern -25pt<}\kern10pt
\vtriple{\scriptstyle 0}\over\circ\over{}\kern-1pt}
\def\ECn{\vtriple{\scriptstyle -2}\over\circ\over{}
\kern-4pt{\dlijntje \kern -25pt>}\kern12pt\vtriple{\scriptstyle n-1}\over\circ\over{}
\kern-1pt\lijntje\kern-1pt\vtriple{\scriptstyle{n-2}}\over\circ\over{}
\cdots\cdots
\vtriple{\scriptstyle 2}\over\circ\over{}
\kern-1pt\lijntje\kern-1pt\vtriple{\scriptstyle 1}\over\circ\over{}
\kern-4pt{\dlijntje \kern -25pt<}\kern12pt
\vtriple{\scriptstyle 0}\over\circ\over{}\kern-1pt}
\def\Fo{\vtriple{\scriptstyle -1}\over\circ\over{}
\kern-1pt\lijntje\kern-1pt
\vtriple{\scriptstyle 1}\over\circ\over{}
\kern-1pt\lijntje\kern-1pt\vtriple{\scriptstyle 2}\over\circ\over{}
\kern-4pt{\dlijntje \kern -25pt<}\kern12pt
\vtriple{\scriptstyle 3}\over\circ\over{}\kern-1pt\lijntje\kern-1pt
\vtriple{\scriptstyle 4}\over\circ\over{}
\kern-1pt}
\def\Ft{
\vtriple{\scriptstyle 1}\over\circ\over{}
\kern-1pt\lijntje\kern-1pt\vtriple{\scriptstyle 2}\over\circ\over{}
\kern-4pt{\dlijntje \kern -25pt<}\kern12pt
\vtriple{\scriptstyle 3}\over\circ\over{}\kern-1pt\lijntje\kern-1pt
\vtriple{\scriptstyle 4}\over\circ\over{}
\kern-1pt\lijntje\kern-1pt
\vtriple{\scriptstyle -2}\over\circ\over{}
\kern-1pt}
\def\Go{\vtriple{\scriptstyle -1}\over\circ\over{}
\kern-1pt\lijntje\kern-1pt
\vtriple{\scriptstyle 0}\over\circ\over{}
\kern-4pt{\tlijntje\kern -25pt<}\kern 12pt\vtriple{\scriptstyle 1}\over\circ\over{}
\kern-1pt}
\def\Gf{
\vtriple{\scriptstyle 0}\over\circ\over{}
\kern-4pt{\tlijntje\kern -25pt<}\kern 12pt\vtriple{\scriptstyle 1}\over\circ\over{}
\kern-1pt\lijntje\kern-1pt
\vtriple{\scriptstyle -2}\over\circ\over{}
\kern-1pt}
\newcommand{\cA}{\mathcal{A}}
\newcommand{\cB}{\mathcal{B}}
\newcommand{\R}{\mathbb R}
\newcommand{\Z}{\mathbb Z}
\newcommand{\fp}{\mathfrak{p}}
\newcommand{\fB}{\mathfrak{B}}
\newtheorem{lemma}{Lemma}[section]
\newtheorem{prop}[lemma]{Proposition}
\newtheorem{thm}[lemma]{Theorem}
\newtheorem{defn}[lemma]{Definition}
\newtheorem{rem}[lemma]{Remark}
\newtheorem{example}[lemma]{Example}
\begin{document}
\title{Brauer Algebras of multiply laced  Weyl type}
\author{Shoumin Liu
\footnotemark[1]}
\date{}
\renewcommand{\thefootnote}{\fnsymbol{footnote}}
\footnotetext[1]{Email Address: s.liu@sdu.edu.cn}
\maketitle

\begin{abstract}In this paper, we will define the Brauer algebras of Weyl types, and describe some propositions of these algebras.
Especially, we prove the result of type $\ddG_2$ to accomplish our project of  Brauer algebras of non-simply laced types.
\end{abstract}
\section{Introduction}
Brauer algebras were first studied by Brauer \cite{Brauer1937} for the Weyl Duality Theorem.
In view of these relations between the Brauer algebras and several objects of type $\ddA$, it is natural to seek analogues of these algebras for other types of spherical Coxeter group. Generalizations for other simply laced types (type $\ddD$ and $\ddE$) of the
Temperley-Lieb algebras (\cite{TL1971}) were first introduced by Graham \cite{Gra} and Fan \cite{F1995} and of Brauer algebras and
BMW algebras  by Cohen, Frenk, Gijsbers, and Wales  \cite{CFW2008}, \cite{CGW2005},  \cite{CGW2008}, \cite{CGW2009}, \cite{CW2011}. We have studied Brauer algebras of types $\ddB_n$, $\ddC_n$, $\ddF_4$ in \cite{CLY2010}, \cite{CL2011} and \cite{L2013}, respectively, and the  result
of Brauer algebra of type $\ddG_2$ will be presented here. Furthermore, the definitions and conclusions for Brauer algebras of type $\ddH_3$ and
$\ddH_4$, and $\ddI_2^n$ can be found in \cite{L20132} and \cite{L2014}, respectively. The Temperley-Lieb algebras of simply laced type which are  quotient algebras of the Hecke algebras,  can be considered as natural subalgebras of Brauer algebras (generated by those $E_i$'s in Section \ref{sect:defnsimlacety}, \cite{CW2011}).   \\
It is reasonable to ask how to define the Brauer algebras of non-simply laced types. Tits \cite{T1959} wrote about how to get  Coxeter groups
of non-simply laced type  from simply laced ones as subgroups of  invariant elements of non-trivial isomorphisms on Dynkin diagrams;
 M\"uhlherr \cite{Muehl92} showed how  to get Coxeter groups as  subgroups of Coxeter groups by admissible partitions of the canonical set of generators. Both of these two works inspired us to construct
 the Brauer algebras of non-simply laced types from simply laced ones. \\
  In this paper, our main aim is to obtain  the spherical non-simply laced Brauer algebras from simply laced types by use of the
 canonical symmetry or partitions defined on their Dynkin diagrams as in \cite{T1959} and \cite{Muehl92}( also see \cite{BC2011}, \cite{Car}, \cite{S1971}, \cite{S1998}).
 Their cellularity is also discussed. Our conclusion is summarized  in the Table \ref{mainresults}.
 In \cite{ZhiChen}, Chen defines the Brauer algebras associated to Coxeter groups by flat connections from some geometric view,
 but the author defines the product of two Temperley-Lieb generators corresponding to roots of different lengths to be zero which we do not require in our paper. In fact, some of the algebras in \cite{ZhiChen}  are  quotients of our algebras of the same types.\\
At the beginning of this paper, we first recall  necessary classical knowledge  about Coxeter groups, Coxeter diagrams, their root systems.We also introduce several  recent results about a partial order on some mutually orthogonal root
sets associated to Coxeter groups of simply laced type from \cite{CGW2006}, and also the corresponding  Brauer algebras. Subsequently we introduce the classical Brauer
algebra and prove some  results similar to those in \cite{CW2011}. In Section \ref{defn:defnnonsimply},
we introduce the definition of Brauer algebras of
Weyl  type
and  some of their basic properties, which generalizes some results in \cite{CGW2009}, \cite{CLY2010}, \cite{CL2011} and \cite{L2013}.
From Section \ref{sectionmaintheoremonG2} to Section \ref{sectiionSBrddD4}, we focus on proving the result of $\Br(\ddG_2)$ in
 Table \ref{mainresults}. In these sections, the main task is to prove that the homomorphism from $\Br(\ddG_2)$ to $\Br(\ddD_4)$ is an isomorphism,
 and the crucial step is to obtain  a basis of $\Br(\ddG_2)$.
\begin{table}
\caption{Main results}
\label{mainresults}
 \begin{center}
\begin{tabular}{|c|c|c|c|}
   \hline
   type & rank  & BSO & cellularity \\
  \hline
  $\ddC_n$(\cite{CLY2010})& $\sum_{i =0}^n\left(\sum_{p+2q = i}\frac{n!}{p! q! (n-i)!}\right)^2\,2^{n-i}\, (n-i)! $& $\ddA_{2n-1}$ & stratified \quad cellular  \\
     \hline
     $\ddB_n$(\cite{CL2011})&$2^{n+1}\cdot n!!- 2^{n}\cdot n!+(n+1)!!-(n+1)!$&$\ddD_{n+1}$&cellular\\
     \hline
     $\ddF_4$(\cite{L2013})&$14985$&$\ddE_6$&  stratified \quad cellular\\
      \hline
     $\ddH_3$(\cite{L20132})&$1045$&$\ddD_6$&  stratified \quad cellular\\
     \hline
     $\ddH_4$(\cite{L20132})&$236025$&$\ddE_8$&  stratified \quad cellular\\
      \hline
     $\ddI_2^n$(\cite{L2014}) \quad ($n\ge 5$) &$2n+n^2$($n$  odd), $2n+\frac{3}{2}n^2$($n$  even) &$\ddA_{n-1}$&  stratified \quad cellular\\
     \hline
      $\ddG_2$&$39$&$\ddD_4$& stratified \quad cellular\\
      \hline
\end{tabular}
\end{center}
\end{table}
\\In  Table \ref{mainresults}, BSO means being subalgebra of. Especially, it is proved that the  Brauer algebra of type  $\ddC_n$  is a stratified cellular algebra as defined in \cite{BO2011}.
\section{Coxeter groups and Weyl groups}
This section is based on the book \cite{B2002}.
\begin{defn}
Let $I$ be a set. A Coxeter matrix over $I$ is a matrix $M=(m_{ij})_{i,j\in I}$
where
$m_{ij}\in\N\cup \{\infty\}$ with $m_{ii}=1$ for $i\in I$, and $m_{ij}=m_{ji}>1$ for distinct $i,j\in I$.
The Coxeter group $W(M)$, or just $W$, of type $M$ is the group with presentation
$$\left<\{r_i\mid i\in I\}\mid (r_ir_j)^{m_{ij}}=1\right>.$$ This means that $W$
is freely generated by  the set $S=\{r_i\mid i\in I\}$ subject to the relations $(r_ir_j)^{m_{ij}}$ if $m_{ij}\in \N$ and no such relation
if $m_{ij}=\infty$. The pair $(W,S)$ is called a Coxeter system of type $M$.
\end{defn}
\begin{defn}
The Coxeter matrix $M$ is often described
by a labeled graph $\Gamma(M)$ whose vertex set is $I$ and in which two nodes $i$
and $j$ are joined by an edge labeled $m_{ij}$ if $m_{ij} > 2$. If $m_{ij} = 3$, then the label
$3$ of the edge $\{i,j\}$ is often omitted. If $m_{ij} = 4$, then instead of the label $4$ at
the edge $\{i,j\}$ one often draws a double bond. If $m_{ij} = 6$, then instead of the label $6$ at
the edge $\{i,j\}$ one often draws a triple bond.
This labeled graph is called
the Coxeter diagram of $M$.
\end{defn}
\begin{defn}
 When referring to a connected component  of a Coxeter matrix $M$ over $I$, we view $M$ as a labeled
graph. In other words, a connected component of $M$ is a maximal connected subset $J$ of $I$ such that
$m_{jk}=2$ for each $j\in J$ and $k\in I\setminus J$. If $M$ has a single connected component, it is called connected or irreducible.
A Coxeter group $W$ over a Coxeter diagram is called irreducible if $M$ is irreducible.
\end{defn}
\begin{defn} Let $X$ and $Y$ be free $\Z$-modules of rank $n$ with a $\Z$-bilinear form $\left<\cdot,\cdot\right>\,:\,X\times Y\rightarrow \Z$.
Let $\Phi$ be a finite subset of $X$ and suppose that for each $\alpha\in \Phi$ we have a corresponding  element $\alpha^\vee$ in $Y$.
Set $\Phi^\vee=\{\alpha^\vee\mid \alpha\in \Phi\}$. Given $\alpha\in \Phi$, we define the linear map $s_\alpha:X\rightarrow X$ by
  $$s_\alpha(x)=x-\left<x,\alpha^\vee\right>\alpha$$
  and similarly the linear map $s_\alpha^{\vee}:Y\rightarrow Y$ by
  $$s_\alpha^\vee(y)=y-\left<\alpha,y\right>\alpha^\vee.$$
\end{defn}
\begin{defn}\label{defn:rootdatum}
 Let $(X,Y,\Phi,\Phi^\vee)$ be a quadruple as above. We say   $(X,Y,\Phi,\Phi^\vee)$ is a root datum if the following three conditions
are satisfied for every root $\alpha\in \Phi$.
\begin{enumerate}[(i)]
\item $s_{\alpha}$ and $s_{\alpha}^\vee$ are reflections.
\item $\Phi$ is closed under the action of $s_{\alpha}$.
\item $\Phi^\vee$ is closed under the action of $s_{\alpha}^\vee$.
\end{enumerate}
We call elements of the finite set $\Phi$ roots and the elements of $\Phi^\vee$ coroots. The group generated by all $s_\alpha$ is called the \emph{Weyl group} of the root datum.\\
A subset $\Pi$ of $\Phi$ is called a \emph{root base} if
\begin{enumerate}[(i)]
\item $\Pi$ is a basis of $\R$-span of $\Phi$,
\item each root $\beta\in \Phi$ can be written as $\beta=\sum_{\alpha\in \Pi}k_{\alpha}\alpha$ with integer coefficients $k_\alpha$ all nonnegative or all nonpositive.
\end{enumerate}
Further the sum of all $|k_{\alpha}|$ for $\beta$ is called the height of $\beta$.
\end{defn}
\begin{defn}
 If $\Pi$ is a root base of  roots $\Phi$, then the root lengths of the simple roots are often registered in the Coxeter diagram by adding an arrow on the labeled edge between the nodes $i$ and $j$ pointing towards $j$ if the length of the root belonging to $i$ is  bigger than
the length belonging to $j$. The resulting diagram is known as the Dynkin diagram.
\end{defn}
\begin{defn} \label{Weyltype}
A Coxeter diagram  is said to be of Weyl type if it only has connected components of   type $\ddA_{n}$ ($n\ge 1$), $\ddB_{n}$ ($n\ge 2$), $\ddC_n$  ($n\ge 3$), $\ddD_n$  ($n\ge 4$), $\ddE_n$ ($6\le n\le 8$), $\ddF_4$, $\ddG_2$ as in Table \ref{DKdiagram}.
\end{defn}
It is known that the irreducible  finite (spherical) Coxeter groups are those of type $\ddA_{n}$, $\ddB_{n}$($\ddC_n$),
$\ddD_n$, $\ddE_n$ ($6\le n\le 8$), $\ddF_4$, $\ddG_2$, $\ddH_3$, $\ddH_4$, and $\ddI_2^{n}$. We  list their Coxeter  diagrams in
 Table \ref{DKdiagram}.
 \begin{table}[!htb]
\caption{Coxeter diagrams of spherical types}\label{DKdiagram}
\begin{center}
\begin{tabular}{c|c}
name&diagram\\
\hline
$\ddA_n$&$\An$\\
$\ddD_n$&$\Dn$\\
$\ddE_n$, $6\le n\le 8$&$\En$\\
$\ddB_n$&$\Bn$\\
$\ddC_n$&$\Cn$\\
$\ddF_4$&$\Ff$\\
$\ddH_3$&$\Ht$\\
$\ddH_4$&$\Hf$\\
$\ddI_2^n$&$\In$\\
$\ddG_2$&$\Gt$
\end{tabular}
\end{center}
\end{table}
 Let $V=\R^{I}$ be the Euclidean space of dimension $l=|I|$ with basis  $\{\alpha_i\}_{i\in \Pi}$. Let $B$ be the bilinear form over
  $V$ such that $$B(\alpha_i,\alpha_j)=-2\cos\frac{\pi}{m_{ij}}.$$ Let $\rho_i(x)=x-B(x,\alpha_i)\alpha_i$,
  for $x\in V$, $i\in I$.
  Then the map $\rho:I\rightarrow {\rm GL}(V)$, $\rho(r_i)=\rho_i$ can determine a faithful representation of
  $W(M)$ on  $\R^{I}$ (\cite{B2002} or \cite{BC2011}).
\begin{defn}
The subset $\Phi=\cup_{i\in I}\rho(W)\alpha_i$ of $V$ is called the root system of $W$. The subset
of $\Phi^+\subset \Phi$ ($\Phi^-\subset \Phi$ ) of all elements with nonnegative (nonpositive) coefficients for the basis $\{\alpha_i\}_{i\in I}$ is called the set of positive roots (negative roots) of $W$. Each root $\alpha_i$ is called a simple root of $W$ for $i\in I$.
\end{defn}
It is known from \cite[Theorem 3, Chapter 6]{B2002} that
\begin{eqnarray*}
\Phi=\Phi^+\cup\Phi^-,\quad \Phi^+=-\Phi^-,\quad \Phi^+\cap\Phi^-=\emptyset.
\end{eqnarray*}
\section{A poset of simply laced type}\label{sect:poset}
Let $Q$ be a spherical Coxeter diagram of simply laced type, i.e., its connected components are of type
$\ddA$, $\ddD$, $\ddE$ as listed in Table \ref{DKdiagram}. This section is to summarize some results in \cite{CGW2006}.

When $Q$ is  $\ddA_n$, $\ddD_n$, $\ddE_6$, $\ddE_7$, or $\ddE_8$, we denote it as $Q\in{\rm ADE}$.
Let $(W, T)$ be the Coxeter system of type $Q$ with $T=\{R_1,\ldots,R_n\}$ associated to the diagram of
$Q$ in Table \ref{DKdiagram}.
Let $\Phi$ be the root system of type $Q$,  let $\Phi^+$ be its positive root system, and let $\alpha_i$ be the simple root
associated to the node $i$ of $Q$. We are interested in sets $B$ of mutually commuting reflections, which has a bijective correspondence with sets of
mutually orthogonal roots of $\Phi^+$, since each reflection in $W$ is uniquely determined by a positive root and vice versa.
\begin{rem}\label{rem:positiveaction}
The action of $w\in W$ on $B$ is given by conjugation in case $B$ is described by reflections and given by
$w\{\beta_1,\ldots, \beta_p\}=\Phi^+\cap \{\pm w\beta_1,\ldots, \pm w\beta_p \}$, in case $B$ is described by positive roots.
For example, $R_4R_1R_2R_1\{\alpha_1+\alpha_2, \alpha_4\}=\{\alpha_1+\alpha_2,\alpha_4\}$, where $Q=\ddA_4$.
\end{rem}
For $\alpha$, $\beta\in \Phi$, we write $\alpha\sim\beta$ to denote $|(\alpha,\beta)|=1$. Thus, for $i$ and $j$ nodes of
$Q$, we have $\alpha_i\sim\alpha_j$ if and only if $i\sim j$.
\begin{defn}
Let $\mathfrak{B}$ be a   $W$-orbit of sets of  mutually orthogonal positive roots. We say that
$\mathfrak{B}$ is an \emph{admissible orbit} if for each $B\in \mathfrak{B}$, and $i$, $j\in Q$ with $i\not\sim j$
 and $\gamma$, $\gamma-\alpha_i+\alpha_j\in B$ we have $r_iB=r_jB$, and each element in $\mathfrak{B}$ is called an admissible root set.
\end{defn}
This is the definition from \cite{CGW2006},  and there is another equivalent definition in \cite{CFW2008}. We also state it here.
\begin{defn}
 Let $B\subset\Phi^+$ be a mutually orthogonal root set. If for all $\gamma_1$, $\gamma_2$, $\gamma_3\in B$
 and $\gamma\in \Phi^+$, with $(\gamma,\gamma_i)=1$, for
 $i=1$, $2$, $3$, we have $2\gamma+\gamma_1+\gamma_2+\gamma_3\in B$, then $B$ is called an admissible root set.
 \end{defn}
 By these two definitions, it follows that the intersection of two admissible root sets are admissible.
 It can be checked by definition that the intersection of two admissible sets are still admissible.  Hence
for a given  set $X$ of mutually orthogonal positive roots, the unique smallest admissible set containing
$X$ is called the admissible closure of $X$, and denoted as $X^{\rm cl}$ (or $\overline{X}$). Up to
the action of the corresponding Weyl groups,
all admissible root sets of type $\ddA_n$, $\ddD_n$,  $\ddE_6$, $\ddE_7$, $\ddE_8$ have appeared in  \cite{CFW2008}, \cite{CGW2009} and \cite{CW2011},
and are listed
in  Table \ref{table:admADE}. In the table, the
set
$Y(t)^*$  consists of all $\alpha^*$ for  $\alpha\in Y(t)$, where $\alpha^*$ is    the unique
positive root orthogonal to $\alpha$ and all other positive roots orthogonal
to $\alpha$ for type $\ddD_n$ with $n> 4$.  For type $\ddD_n$, if we considier the root systems are realized in $\R^{n}$,
with $\alpha_1=\epsilon_2-\epsilon_1$, $\alpha_2=\epsilon_2+\epsilon_1$, $\alpha_i=\epsilon_i-\epsilon_{i-1}$, for $3\le i\le n$,
then $\Phi^+=\{\epsilon_j\pm\epsilon_i\}_{1\le i<j\le n}$, then  $(\epsilon_j\pm\epsilon_i)^*=\epsilon_j\mp\epsilon_i$.  For $\ddD_4$,
the $t$ can be $0$, $1$, $2$, $3$, which means the number of nods in the  coclique.
When $t=2$, although in the Dynkin diagram $\{\alpha_1,\alpha_2\}$ and $\{\alpha_1,\alpha_4\}$ are symmetric,  they are  in the
 different orbits under the  Weyl group's actions.  Then the admissible root sets for $\ddD_4$ can be written as the $W(\ddD_4)$'s
 orbits of $\emptyset$, $\{\alpha_3\}$,  $\{\alpha_1,\alpha_2\}$,  $\{\alpha_1,\alpha_4\}$, and $\{\alpha_1,\alpha_2,\alpha_4,\alpha_1+\alpha_2+\alpha_4+2\alpha_3\}.$
\begin{table}
\caption{Admissible root sets of simply laced type}
\label{table:admADE}
 \begin{center}
\begin{tabular}{|c|c|}
\hline
$Q$&representatives of orbits \, under\, $W(Q)$\\
\hline
$\ddA_n$&$\{\alpha_{2i-1}\}_{i=1}^{t}$,\,$0\le t\le \left\lfloor{(n+1)/2}\right\rfloor. $\\
\hline
$\ddD_n$&$Y(t)=\{\alpha_{n+2-2i}, \alpha_{n-2},\ldots, \alpha_{n+2-2t}\}$ \, $0\le t\le \left\lfloor{n/2}\right\rfloor. $\\
        & $\{\alpha_{n+2-2i}, \alpha_{n-2},\ldots, \alpha_{4}, \alpha_1\}$  \text{if}    $2|n$\\
        & $Y(t)\cup Y(t)^*$ \, $0\le t\le \left\lfloor{n/2}\right\rfloor$\\
\hline
$\ddE_6$ & $\emptyset$, $\{\alpha_6\}$, $\{\alpha_6, \alpha_4\}$,  $\{\alpha_6, \alpha_2, \alpha_3\}^{\rm cl}$ \\
\hline
$\ddE_7$ & $\emptyset$, $\{\alpha_7\}$, $\{\alpha_7, \alpha_5\}$,  $\{\alpha_5, \alpha_5, \alpha_2\}$, $\{\alpha_7, \alpha_2, \alpha_3\}^{\rm cl}$,
                               $\{\alpha_7, \alpha_5, \alpha_2, \alpha_3\}^{\rm cl}$   \\
\hline
$\ddE_8$ & $\emptyset$, $\{\alpha_8\}$, $\{\alpha_8, \alpha_6\}$,   $\{\alpha_8, \alpha_2, \alpha_3\}^{\rm cl}$,
                               $\{\alpha_8, \alpha_5, \alpha_2, \alpha_3\}^{\rm cl}$   \\
\hline
\end{tabular}
\end{center}
\end{table}

\begin{example} If $Q=\ddD_4$, the root set $\{\alpha_1, \alpha_2, \alpha_4\}$ is mutually orthogonal but not admissible,
and its admissible closure is $\{\alpha_1, \alpha_2, \alpha_4, \alpha_1+\alpha_2+2\alpha_3+\alpha_4\}$.
\end{example}
\begin{defn}
\label{df:cA}
Let $\cA$ denote the collection of all admissible subsets of $\Phi$ consisting of
mutually orthogonal  positive roots.
Members of $\cA$ are called admissible sets.
\end{defn}
Now we consider the actions of  $R_i$ on an admissible $W$-orbit $\fB$. When $R_iB\neq B$, We say that
$R_i$ lowers $B$ if there is a root $\beta\in B$ of minimal height  among those moved by $R_i$ that
satisfies $\beta-\alpha_i\in \Phi^+$ or $R_iB<B$.
We say that $R_i$ raises $B$ if there is a root $\beta\in B$ of minimal height among
those moved by $R_i$ that satisfies $\beta+\alpha_i\in \Phi^+$ or $R_iB>B$. By this
we can set an  partial order on $\fB=WB$. The poset $(\fB, <)$ with this minimal ordering is called the monoidal poset (with respect to $W$) on $\fB$ (so $\fB$ should be admissible for the poset to be monoidal). If $\fB$ just consists of sets
of a single root, the order is determined by the canonical height function on roots. There is an important conclusion in \cite{CGW2006},  stated  below.
This theorem plays a crucial role in obtaining a basis for Brauer algebra of simply laced type in \cite{CFW2008}.
\begin{thm}\label{thm:maximal}
 There is a unique maximal element in $\fB$.
 \end{thm}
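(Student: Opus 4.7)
The plan is to prove existence and uniqueness separately. Existence is immediate: since $Q$ is of spherical simply laced type, the Weyl group $W$ is finite, so the orbit $\fB = WB$ is a finite poset, and maximal elements exist automatically.

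For uniqueness, my approach is to exhibit an explicit candidate $B^* \in \fB$ and then to show that any $B \in \fB$ admits a sequence of raising moves terminating at $B^*$. The natural candidate is obtained from the orbit representative listed in Table \ref{table:admADE} (which typically consists of low-height roots such as simple roots) by applying the longest element $w_0 \in W$; this yields an admissible set whose roots are of maximal possible height within the orbit. Once $B^*$ is identified, the theorem reduces to the following local rigidity claim: if $B \in \fB$ is such that for every $i \in I$ either $R_iB = B$ or $R_i$ lowers $B$, then $B = B^*$. Indeed, any $B \neq B^*$ would then admit some $R_i$ that raises it, and iterating produces a path up to $B^*$ (the process terminates since the sum of heights over $B$ is bounded by $\fB$ being finite).

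The heart of the argument is this local rigidity statement, and its verification proceeds case-by-case along the classification in Table \ref{table:admADE}. For each $i$ with $R_iB \neq B$, the lowering condition forces the minimal-height root of $B$ moved by $R_i$ to have positive inner product with $\alpha_i$; collecting these constraints across all $i \in I$ pins down the combinatorial shape of $B$. In type $\ddA_n$ this shape corresponds to a reversed pattern of simple roots, and in type $\ddD_n$ one must track the interplay between $Y(t)$ and its starred counterpart $Y(t)^*$. The main obstacle is the exceptional types $\ddE_6$, $\ddE_7$, $\ddE_8$, where the admissible closures involve cliques built from $\{\alpha_2,\alpha_3\}^{\rm cl}$ and related patterns; here a uniform argument would exploit the admissibility condition $2\gamma + \gamma_1 + \gamma_2 + \gamma_3 \in B$ together with the symmetry induced by $w_0$ (and the opposition involution $-w_0$ on the Dynkin diagram) to match the constrained $B$ against $B^*$. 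An alternative that sidesteps some of the case work is to prove uniqueness of the \emph{minimum} first, which is simpler since the orbit representatives in Table \ref{table:admADE} are visibly minimal by height, and then transport this conclusion to the maximum via the order-reversing action of $w_0$.
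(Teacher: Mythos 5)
First, note that the paper does not prove this statement at all: Theorem \ref{thm:maximal} is quoted from \cite{CGW2006} and used as a black box, so there is no internal proof to compare against. Your proposal must therefore stand on its own, and it does not.

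The concrete error is your choice of candidate maximum. You propose $B^{*}=w_{0}B_{\mathrm{rep}}$, where $B_{\mathrm{rep}}$ is the low-height representative from Table \ref{table:admADE}. But $-w_{0}$ permutes the simple roots (it is the opposition diagram involution), so $w_{0}$ sends a set of simple roots to another set of simple roots, never to a set of high roots. In type $\ddA_{4}$ one gets $w_{0}\{\alpha_{1},\alpha_{3}\}=\{\alpha_{4},\alpha_{2}\}$, whereas the paper itself records the true maximum of that orbit as $\{\alpha_{1}+\alpha_{2}+\alpha_{3},\,\alpha_{2}+\alpha_{3}+\alpha_{4}\}$. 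The same objection defeats your fallback: the map $B\mapsto w_{0}B$ is not order-reversing for the monoidal order (if $\beta+\alpha_{i}\in\Phi^{+}$ then $-w_{0}\beta+\alpha_{\sigma(i)}\in\Phi^{+}$ with $\sigma=-w_{0}$, so the raising relation is essentially transported to a raising relation, not a lowering one), so uniqueness of the minimum cannot be ``transported'' to the maximum this way. Beyond this, the step you yourself identify as the heart of the argument --- the local rigidity claim that a set admitting no raising move must equal $B^{*}$ --- is never established: for types $\ddD_{n}$ and $\ddE_{6},\ddE_{7},\ddE_{8}$ you only describe what a verification ``would exploit.'' Since the entire content of the theorem is uniqueness (existence being trivial by finiteness, as you say), what remains is a plan rather than a proof. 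A correct treatment requires either the explicit case analysis of \cite{CGW2006} or an argument showing that from any two maximal elements one can be raised toward the other, neither of which is supplied here.
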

 \section{Brauer algebras of simply laced types}\label{sect:defnsimlacety}
The Brauer algebra of type $\ddA$ was first introduced in \cite{Brauer1937} for studying the invariant theory of
orthogonal group.
In \cite{CFW2008}, it is extended to  simply laced types, in the way described below.
\begin{defn}\label{1.1}\label{c3.1.1}
Let $R$ be a commutative ring with invertible element $\delta$ and let $Q$
be a simply laced Coxeter diagram. The \emph{Brauer algebra of type $Q$ over
$R$ with loop parameter $\delta$}, denoted $\Br(Q,R,\delta)$, is the $R$-algebra generated by $R_i$ and $E_i$, for each node $i$ of $Q$
subject to the following relations, where $\sim$ denotes
adjacency between nodes of $Q$.
\begin{eqnarray}
R_{i}^{2}&=&1          \label{1.1.2} \\
E_{i}^{2}&=&\delta E_{i}   \label{1.1.4} \\
R_iE_i&=&E_iR_i \,=\, E_i     \label{1.1.3} \\
R_iR_j&=&R_jR_i, \,\, \mbox{for}\, \it{i\nsim j} \label{1.1.5} \\
E_iR_j&=&R_jE_i,\,\, \mbox{for}\, \it{i\nsim j}  \label{1.1.6} \\
E_iE_j&=&E_jE_i,\,\, \mbox{for}\, \it{i\nsim j}    \label{1.1.7} \\
R_iR_jR_i&=&R_jR_iR_j, \,\, \mbox{for}\, \it{i\sim j}  \label{1.1.8} \\
R_jR_iE_j&=&E_iE_j ,\,\, \mbox{for}\, \it{i\sim j}       \label{1.1.9} \\
R_iE_jR_i&=&R_jE_iR_j ,\,\, \mbox{for}\, \it{i\sim j}     \label{1.1.10}
\end{eqnarray}
As before, we call $\Br(Q) := \Br(Q,\Z[\delta^{\pm1}],\delta)$
the Brauer algebra of type $Q$
and denote by $\BrM(Q)$
the submonoid of the multiplicative monoid of $\Br(Q)$ generated by $\delta^{\pm 1}$ and
all $R_i$ and $E_i$.
 \end{defn}
 \begin{rem}\label{morerel}
As a consequence of the  relations of Brauer algebras  $\Br(Q)$ of simply laced type, it is straightforward to show that
the following relations hold in $\Br(Q)$ for all nodes $i$, $j$, $k$ with
$i\sim j \sim k$ and $i\not\sim k$ (see \cite[Lemma 3.1]{CFW2008}).
\begin{eqnarray}
E_iR_jR_j&=&E_iE_j \label{3.1.1} \\ R_jE_iE_j &=& R_i E_j \label{3.1.2} \\
E_iR_jE_i &=& E_i \label{3.1.3} \\ E_jE_iR_j &=& E_j R_i \label{3.1.4} \\
E_iE_jE_i &=& E_i \label{3.1.5} \\ E_jE_iR_k E_j &=& E_jR_iE_k E_j
\label{3.1.6} \\ E_jR_iR_k E_j &=& E_jE_iE_k E_j \label{3.1.7}
\end{eqnarray}
\end{rem}
 For any $\beta\in\Phi^+$ and $i\in\{1,\ldots,n\}$, there exists a $w\in
W$ such that $\beta = w\alpha_i$. Then $R_\beta := wR_iw^{-1}$ and
$E_\beta := wE_iw^{-1}$ are well defined (this is well known from Coxeter
group theory for $R_\beta$;
see \cite[Lemma 4.2]{CFW2008} for $E_\beta$).  If
$\beta,\gamma\in\Phi^+$ are mutually orthogonal, then $E_\beta$ and
$E_\gamma$ commute (see \cite[Lemma 4.3]{CFW2008}). Hence, for $B\in\cA$, we
 define the product
\begin{eqnarray}
\label{eqn:EprodB}
E_B &=& \prod_{\beta\in B} E_\beta,
\end{eqnarray}
which is a quasi-idempotent, and the normalized version
\begin{eqnarray}
\label{eqn:EhatB}
\hE_B &=& \delta^{-|B|} E_B,
\end{eqnarray}
which is an idempotent element of the Brauer monoid.
For a mutually orthogonal root subset $X\subset \Phi^+$, we have
\begin{eqnarray}
\label{eqn:Ecloure}
E_{X^{\rm cl}}=\delta^{|X^{\rm cl}\setminus X|}E_{X}.
\end{eqnarray}
%
Let $C_X=\{i\in Q\mid \alpha_i \perp X\}$ and
let $W(C_{X})$ be the subgroup generated by the
generators of nodes in $C_X$.
The subgroup $W(C_{X})$ is called the \emph{centralizer} of $X$.
The normalizer of $X$, denoted by $N_{X}$ can be defined as
$$N_{X} =\{w\in W\mid E_X w=w E_X\}.$$
We let $D_X$ denote a set of right coset representatives for $N_X$ in $W$.\\
In \cite[Definition 3.2]{CFW2008}, an action of the Brauer monoid
$\BrM(Q)$ on the collection $\cA$ of admissible root sets
in $\Phi^+$ was indicated below, where  $Q\in {\rm ADE}$.
\begin{defn}\label{eq:aboveaction}
There is an action of the Brauer monoid $\BrM(Q)$ on the collection
$\cA$.  The generators $R_i$ $(i=1,\ldots,n)$ act by the natural action of
Coxeter group elements on its  positive root sets as in Remark \ref{rem:positiveaction},
and the element $\delta$ acts as the identity,
and the action of $E_i$ $(i=1,\ldots,n)$ is defined by
\begin{equation}
E_i B :=\begin{cases}
B & \text{if}\ \alpha_i\in B, \\
(B\cup \{\alpha_{i}\})^{\rm cl} & \text{if}\ \alpha_i\perp B,\\
R_\beta R_i B & \text{if}\ \beta\in B\setminus \alpha_{i}^{\perp}.
\end{cases}
\end{equation}
\end{defn}

We will refer to this action as the admissible set action.
This monoid action plays an important role in getting a basis of $\BrM(Q)$ in \cite{CFW2008}. For the basis, we state one  conclusion from \cite[Proposition 4.9]{CFW2008} below.
 \begin{prop}\label{ADErewform}
  Each element of the Brauer monoid $\BrM(Q)$ can be written in the form $$\delta^k uE_{X}zv,$$ where
 $X$ is the highest element from one $W$-orbit in $\cA$, $u$, $v^{-1}\in D_X$, $z\in W(C_X)$, and $k\in \Z$.
 \end{prop}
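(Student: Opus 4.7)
The plan is to argue by induction on the number $s$ of $E$-generators appearing in a word for $b \in \BrM(Q)$. For the base case $s=0$, one has $b = \delta^k w$ for some $w \in W$ and $k \in \Z$, which already fits the required form with $X = \emptyset$, $u = v = 1$, and $z = w \in W(C_\emptyset) = W$.

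For the inductive step, write $b = b' \cdot g$ where $b'$ contains $s-1$ factors of the form $E_i$ and $g$ is a single generator. By induction $b'$ is expressible in the weaker form $\delta^{k'} u' E_{Y'} z' v'$ for some admissible set $Y' \in \cA$ and elements $u', v', z' \in W$, where for now I ignore the highest-orbit and coset constraints. If $g$ is a scalar or a Weyl generator, the product is handled by absorbing $g$ into $v'$ or into $\delta^{k'}$. If $g = E_j$, I push $v'$ past $E_j$ using the identity $w E_i = E_{w\alpha_i} w$, reducing the question to rewriting a product $E_{Y'} E_\gamma$ for some positive root $\gamma$. I then handle this product in exact correspondence with the three cases of the admissible set action of Definition \ref{eq:aboveaction}: if $\gamma \in Y'$ the product collapses to $\delta E_{Y'}$; if $\gamma \perp Y'$ the product becomes $\delta^{-t} E_{(Y' \cup \{\gamma\})^{\rm cl}}$ for some $t \ge 0$ by formula (\ref{eqn:Ecloure}); and if there is $\beta \in Y'$ with $\beta \not\perp \gamma$, relations such as (\ref{3.1.2}) give $E_{Y'} E_\gamma = E_{Y'} R_\gamma R_\beta$, absorbing the produced Weyl factors on the right. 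In every case $b$ is brought back to the weaker form $\delta^{k''} u'' E_{Y''} z'' v''$ for some admissible $Y''$.

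Once this is achieved, I invoke Theorem \ref{thm:maximal} to select the unique highest element $X$ in the $W$-orbit of $Y''$ and pick $w_0 \in W$ with $E_{Y''} = w_0 E_X w_0^{-1}$, so that $b = \delta^{k''} (u'' w_0) E_X (w_0^{-1} z'' v'')$. Using the right coset decomposition $W = \bigsqcup_{d \in D_X} d N_X$, I write $u'' w_0 = u \, n_1$ with $u \in D_X$, $n_1 \in N_X$, and analogously $w_0^{-1} z'' v'' = n_2 \, v$ with $v^{-1} \in D_X$, $n_2 \in N_X$. Since elements of $N_X$ commute with $E_X$ by definition, this yields $b = \delta^{k''} u E_X (n_1 n_2) v$ with $n_1 n_2 \in N_X$.

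The main obstacle I expect is this final reduction from a generic factor $n := n_1 n_2 \in N_X$ to an element $z \in W(C_X)$. The normalizer $N_X$ is strictly larger than $W(C_X)$ in general, containing in addition the reflections $R_\beta$ for $\beta \in X$ (which satisfy $E_X R_\beta = E_X$ and therefore vanish into $E_X$) and possibly setwise stabilizers of $X$ that nontrivially permute its elements. Showing that such extra factors can always be absorbed — either via $E_X R_\beta = E_X$ or by redefining the coset representatives $u$ and $v$ within their $N_X$-cosets — requires a detailed case-by-case analysis of the normalizer structure for each admissible orbit listed in Table \ref{table:admADE}.
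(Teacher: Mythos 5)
The paper itself offers no proof of this proposition --- it is imported verbatim from \cite[Proposition 4.9]{CFW2008} --- so the comparison can only be against the cited source. Your strategy (induction on the number of Temperley--Lieb generators; rewriting $E_{Y'}E_\gamma$ according to the three cases of the admissible set action of Definition \ref{eq:aboveaction}; conjugating to the highest element of the orbit via Theorem \ref{thm:maximal}; splitting off coset representatives for $N_X$) is essentially the strategy of that source, and the first stages are sound up to small slips: the identity you need in the third case is $E_\beta E_\gamma = E_\beta R_\gamma R_\beta$, which is the op-dual of (\ref{1.1.9}) rather than an instance of (\ref{3.1.2}), and since the paper takes $D_X$ to be \emph{right} coset representatives you must keep track of which side of $E_X$ the $N_X$-factor is extracted on.

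The genuine gap is exactly the one you name and then leave open: passing from $E_X n$ with $n\in N_X$ to $E_X z$ with $z\in W(C_X)$. This is not a routine absorption, and without it you have only proved the weaker normal form $\delta^k u E_X n v$ with $n$ in the setwise stabilizer of $X$. A concrete instance: in type $\ddA_3$ the highest element of the orbit of $\{\alpha_1,\alpha_3\}$ is $X=\{\alpha_1+\alpha_2,\,\alpha_2+\alpha_3\}=\{\epsilon_3-\epsilon_1,\epsilon_4-\epsilon_2\}$; here $C_X=\emptyset$, so $W(C_X)=1$, while $N_X$ has order $8$ and contains, besides $R_\beta$ for $\beta\in X$, the element $R_1R_3$ which interchanges the two roots of $X$. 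One must prove $E_X R_1R_3=E_X$ from the defining relations; it is neither an instance of $E_\beta R_\beta=E_\beta$ nor obtainable by redefining coset representatives, since the required $z$ would have to lie in the trivial group $W(C_X)$. The statement that $E_X n\in E_X W(C_X)$ for all $n\in N_X$ is precisely where the hypothesis that $X$ is the \emph{highest} element of its orbit does real work, and in \cite{CFW2008} it is established by an analysis of the stabilizers of the orbit representatives in Table \ref{table:admADE}. You should either carry out that analysis as a separate lemma or cite it explicitly; as written, your argument stops just short of the crux of the proposition.
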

 There is a  more general version for simply laced types in \cite{CW2011}.
We keep  notation as  in \cite[Section 2]{CW2011} and first introduce some basic concepts.
Let $Q$ be  the  diagram  of a  connected finite simply laced Coxeter
group (type $\ddA_n$, $\ddD_n$, $\ddE_6$,  $\ddE_7$,  $\ddE_8$).
Then $\BrM(Q)$ is the associated Brauer monoid as  in Definition \ref{1.1}.
  By $B_Y$ we denote
the admissible closure  of $\{\alpha_i|i\in Y\}$, where $Y$ is a coclique
of $Q$. The set $B_Y$ is a minimal element in the $W(Q)$-orbit of $B_Y$ which is endowed with a  poset  structure
induced by the partial ordering $<$
defined on $W(Q)$-orbits  in $\cA$ in Section \ref{sect:poset}. If $d$ is the Hasse diagram distance for $W(Q)B_Y$
from $B_Y$ to the unique maximal element, then for $B\in W(Q)B_Y$ the height of $B$, already used in Definition
notation $\rm{ht}$$(B)$, is $d-l$, where $l$ is the distance in the
Hasse diagram from $B$ to the maximal element. The Figure \ref{fig:Hassediagram} is a Hasse diagram of admissible sets of type $\ddA_4$ with $2$  mutually orthogonal positive roots.
 As indicated in Theorem \ref{thm:maximal}, the set $\{\alpha_1+\alpha_2+\alpha_3, \alpha_2+\alpha_3+\alpha_4\}$ is the maximal root set in its $W(\ddA_4)$-orbit.
\begin{figure}[!htb]
\begin{center}
\includegraphics[width=.9\textwidth,height=.5\textheight]{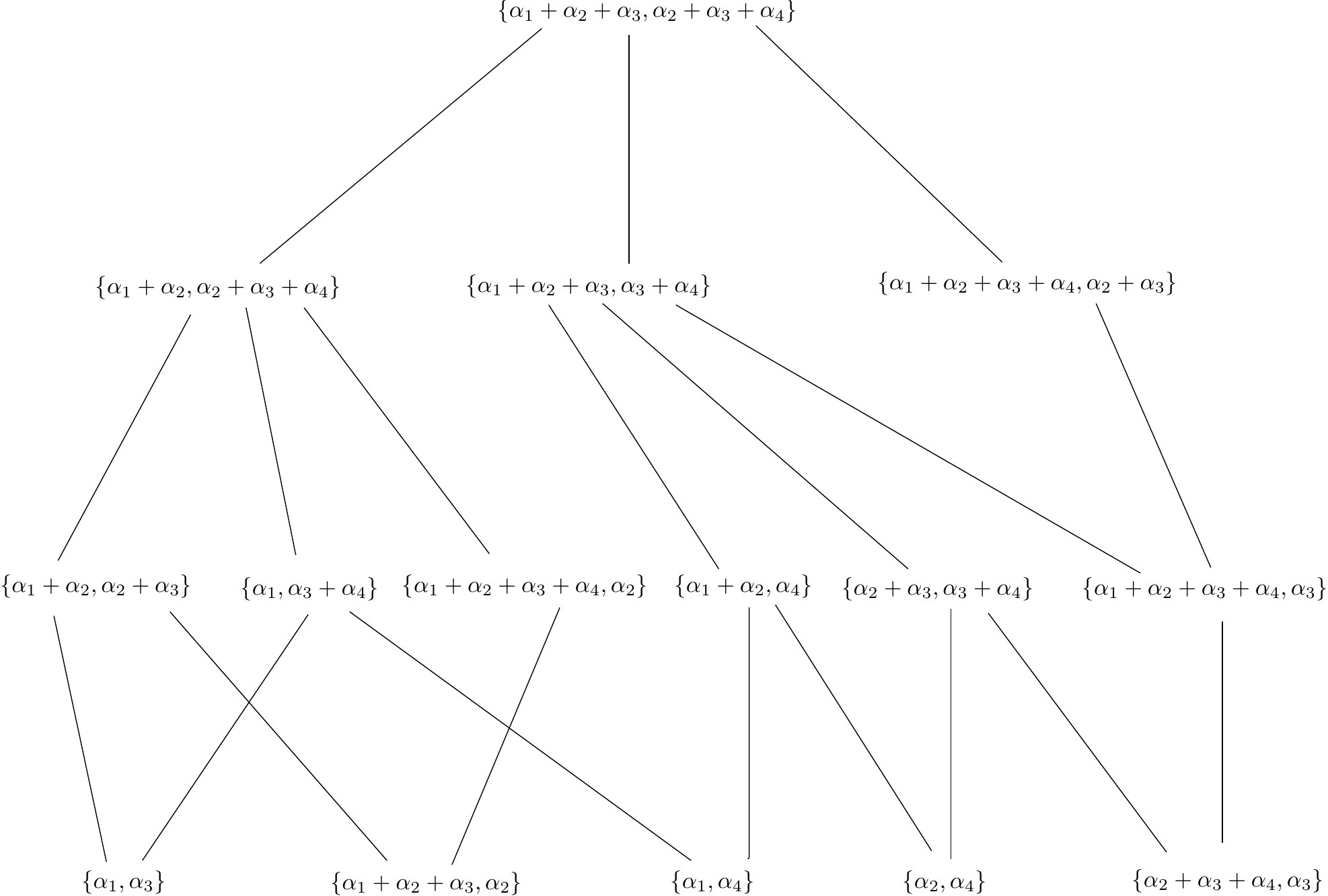}
\end{center}
\caption{A Hasse diagram of type $\ddA_4$.}
\label{fig:Hassediagram}
\end{figure}
\begin{thm} \label{thm:genralwriting}
(\cite[Theorem 2.7]{CW2011}) Each monomial  $a$ in $\BrM(Q)$ can be
uniquely written as $\delta^{i} a_{B} \hat{E}_Y h a_{B'}^{\rm op}$ for some $i\in \Z$ and $h\in W(Q_{Y})$,
where $W(Q_{Y})$ is the group of invertible elements in $\hat{E_Y}W(Q)\hat{E_Y}$,
$B=a\emptyset$, $B^{'}=\emptyset a $, $a_{B}\in \BrM(Q)$, $a_{B'}^{\rm op}\in \BrM(Q)$ and
\\ (i) $a\emptyset=a_{B}\emptyset=a_{B}B_Y$,  $\emptyset a= \emptyset a_{B'}^{\rm op}= B_Y a_{B'}^{\rm op}$,
\\(ii) $\rm{ht}$$(B)=$\rm{ht}$(a_{B})$, $\rm{ht}$$(B')=$\rm{ht}$(a_{B'}^{\rm op})$.
\end{thm}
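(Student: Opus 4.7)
The plan is to refine the canonical form in Proposition~\ref{ADErewform} by enforcing height-minimal choices for the outer factors. The strategy has three stages: first, construct canonical height-minimal lifts $a_B \in \BrM(Q)$ of elements of the orbit $\fB = W(Q) B_Y$; second, massage Proposition~\ref{ADErewform} into the asserted shape using these lifts; third, deduce uniqueness from the faithfulness of the admissible set action together with the group-of-units characterization of $W(Q_Y)$ inside the corner $\hat{E}_Y \BrM(Q) \hat{E}_Y$.

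For the first stage, I would use the monoidal poset on $\fB$ from Section~\ref{sect:poset}. For each $B \in \fB$, fix any saturated chain $B_Y = B_0 < B_1 < \cdots < B_\ell = B$ in the Hasse diagram, and lift each covering relation to the corresponding raising generator from Definition~\ref{eq:aboveaction} (an $R_i$ or an $E_i$). Multiplying these generators yields a monomial $a_B$ with $a_B \emptyset = B$ and $\mathrm{ht}(a_B) = \mathrm{ht}(B) = \ell$. A diamond-confluence argument, using the braid relation \eqref{1.1.8} for pure-$R$ diamonds, \eqref{1.1.9}--\eqref{1.1.10} and \eqref{3.1.1}--\eqref{3.1.7} for mixed $E$-$R$ diamonds, and \eqref{1.1.7} for disjoint-$E$ diamonds, shows that all such chains produce the same $a_B$. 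The mirror construction in the opposite monoid yields $a_{B'}^{\rm op}$.

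For the second stage, I would apply Proposition~\ref{ADErewform} to an arbitrary monomial $a$ to obtain $a = \delta^k u E_X z v$ with $X$ the highest element in its orbit and $z \in W(C_X)$. Let $Y$ be the coclique whose admissible closure lies in the same orbit as $X$; under the admissible set action $B := a\emptyset$ and $B' := \emptyset a$ belong to $W(Q) B_Y$. Using \eqref{eqn:EhatB} and \eqref{eqn:Ecloure}, I rewrite the outer products as $u E_X = \delta^{k_1} a_B \hat{E}_Y$ and $E_X v = \delta^{k_2} \hat{E}_Y a_{B'}^{\rm op}$; here the $R_i$'s in $u$ that stabilize $Y^{\rm cl}$ are absorbed into $\hat{E}_Y$ via \eqref{1.1.3} and its consequences, while the remaining moves reproduce the height-minimal chain from the first stage. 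The middle piece then lies in $\hat{E}_Y W(Q) \hat{E}_Y$ and, being an invertible factor, equals an element $h \in W(Q_Y)$ up to a scalar. Collecting all powers of $\delta$ into one exponent $i \in \Z$ gives the asserted factorization, and the height-minimality of $a_B$ and $a_{B'}^{\rm op}$ enforces conditions (i) and (ii) of the statement.

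The main obstacle is uniqueness. Given two such presentations of the same $a$, evaluating on $\emptyset$ on the left and right under the admissible set action forces the outer admissible sets to agree. Substituting the canonical $a_B$ and $a_{B'}^{\rm op}$ reduces the question to the statement that $W(Q_Y)$ injects into $\hat{E}_Y \BrM(Q) \hat{E}_Y$ as its group of units, so that $\delta^{i-i'} h = h'$ forces $i = i'$ and $h = h'$. The subtlety is precisely the diamond confluence of the first stage: without it, different saturated chains could produce distinct $a_B$, and the middle factor would absorb the discrepancy, destroying uniqueness of $h$. The hard work therefore sits in carefully enumerating the covers of the monoidal poset and matching them to the defining relations of $\BrM(Q)$; once this is done, the remainder of the argument is bookkeeping in powers of $\delta$.
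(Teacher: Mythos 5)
The first thing to note is that the paper does not prove this statement at all: Theorem \ref{thm:genralwriting} is imported verbatim from \cite[Theorem 2.7]{CW2011} and used as a black box, so there is no in-paper proof to compare yours against. Judged on its own terms, your sketch reproduces the broad architecture of the Cohen--Wales argument (height-minimal lifts of elements of the orbit $W(Q)B_Y$, reduction to the older normal form of Proposition \ref{ADErewform}, and a separate analysis of the corner $\hat{E}_Y\BrM(Q)\hat{E}_Y$), but several of the load-bearing steps are asserted rather than proved, and one is stated incorrectly.

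Concretely: (1) the identity $uE_X=\delta^{k_1}a_B\hat{E}_Y$ cannot hold, because the two sides act differently on $\emptyset$ from the right: $\emptyset(uE_X)=X$, the top of the orbit, while $\emptyset(a_B\hat{E}_Y)=B_Y$, the bottom. The correct manipulation must first conjugate, $E_X=wE_{B_Y}w^{-1}$ for $w$ with $wB_Y=X$, and then split $uw$ along the normalizer of $B_Y$; the normalizer part is what produces $h$ and is not ``absorbed into $\hat{E}_Y$''. (2) Your diamond-confluence claim --- that every saturated chain from $B_Y$ to $B$ lifts to the same monomial $a_B$ --- is stronger than what is true or needed; what one actually proves is that any two minimal-height monomials $a,a'$ with $a\emptyset=a'\emptyset=B$ satisfy $a\hat{E}_Y=a'\hat{E}_Y$, so that only the product $a_B\hat{E}_Y$ is canonical, and the uniqueness in the theorem is of the data $(i,B,h,B')$. (3) Most seriously, faithfulness of the admissible-set action cannot deliver uniqueness of $i$ and $h$: every element of $\hat{E}_YW(Q)\hat{E}_Y$ sends $\emptyset$ to $B_Y$ on both sides, so the action is blind to the middle factor. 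The claim that the elements $\delta^i h$ with $h\in W(Q_Y)$ are pairwise distinct monomials (equivalently, that $W(Q_Y)$ really is a Coxeter group of the advertised type sitting freely inside the corner algebra) is a substantive theorem in \cite{CW2011}, proved there by identifying $Q_Y$ explicitly for each orbit and exhibiting a faithful realization; it does not follow formally from the group-of-units definition. As it stands, your proposal is a plausible outline of the known proof with the genuinely hard points left open.
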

 \section{Definition of Brauer algebras of type $\ddB_n$, $\ddC_n$, $\ddF_4$, $\ddG_2$}\label{defn:defnnonsimply}
We denote $M\in {\rm BCFG}$, if $M$ is a Dynkin diagram of type $\ddB_n$, $\ddC_n$, $\ddF_4$ or $\ddG_2$.
We abuse the notation $i\in M$ for a node  $i$ of $M$.
\begin{defn} \label{0.1}\label{c3.0.1}\label{c4.0.1}\label{c7.0.1}
Let $R$ be a commutative ring with invertible element
$\delta$ and $M$ be a Dynkin diagram of Weyl type.  For $n\in \N$, the \emph{Brauer algebra of type $M$} over $R$
with loop parameter $\delta$, denoted by $\Br(M,R,\delta)$, is the
$R$-algebra generated by $\{r_i,e_i\}_{i\in M}$ subject to the following relations.
For each $i\in M$,
\begin{eqnarray}
r_{i}^{2}&=&1,    \label{0.1.3} \label{c3.0.1.3}\label{c4.0.1.3} \label{c7.0.1.3}\label{c6.0.1.3}
\\
r_ie_i &= & e_ir_i \,=\, e_i,  \label{0.1.4} \label{c3.0.1.4} \label{c4.0.1.4}\label{c7.0.1.4}\label{c6.0.1.4}
\\
e_{i}^{2}&=&\delta^{\kappa_i}e_{i};\label{0.1.5}\label{0.1.6}\label{c6.0.1.5}\label{c3.0.1.5}\label{c3.0.1.6}\label{c4.0.1.5}\label{c4.0.1.6}\label{c7.0.1.5}\label{c7.0.1.6}
\end{eqnarray}
for $i$, $j\in M$  not adjacent to each other, namely $\vtriple{\scriptstyle i}\over\circ\over{}\kern-1pt\quad\kern-1pt
\vtriple{\scriptstyle{j}}\over\circ\over{}\kern-1pt$ ,
\begin{eqnarray}
r_ir_j&=&r_jr_i,   \label{0.1.7}\label{c3.0.1.7}\label{c4.0.1.7}\label{c6.0.1.6}
\\
e_ir_j&=&r_je_i,     \label{0.1.8}\label{c3.0.1.8}\label{c4.0.1.8}\label{c6.0.1.7}
\\
e_ie_j&=&e_je_i;    \label{0.1.9} \label{c3.0.1.9}\label{c4.0.1.9}  \label{c6.0.1.8}
\end{eqnarray}
for $i$, $j\in M$  and $\vtriple{\scriptstyle i}\over\circ\over{}\kern-1pt\lijntje\kern-1pt
\vtriple{\scriptstyle{j}}\over\circ\over{}\kern-1pt$ ,
\begin{eqnarray}
r_ir_jr_i&=&r_jr_ir_j,  \label{0.1.10}\label{c3.0.1.10} \label{c4.0.1.10} \label{c6.0.1.9}
\\
r_jr_ie_j&=&e_ie_j ,             \label{0.1.13}\label{c3.0.1.13} \label{c4.0.1.11}\label{c6.0.1.10}
\\
r_ie_jr_i&=&r_je_ir_j;         \label{0.1.15} \label{c3.0.1.15} \label{c4.0.1.12}\label{c6.0.1.11}
\end{eqnarray}
for $i$, $j\in M$ and $\vtriple{\scriptstyle i}\over\circ\over{}
\kern-4pt{\dlijntje \kern -25pt>}\kern12pt
\vtriple{\scriptstyle j}\over\circ\over{}\kern-1pt$ ,
\begin{eqnarray}
r_{j}r_ir_{j}r_{i}&=&r_ir_{j}r_ir_{j},             \label{0.1.11} \label{c3.0.1.11}\label{c4.0.1.13}
 \\
r_{j}r_ie_{j}&=&r_ie_{j},                               \label{0.1.14}\label{c3.0.1.14} \label{c4.0.1.14}
\\
  r_{j}e_ir_{j}e_i&=&e_ie_{j}e_i,                                                        \label{0.1.19} \label{c3.0.1.19}\label{c4.0.1.15}
\\
(r_{j}r_ir_{j})e_i&=&e_i(r_{j}r_ir_{j}),                                                            \label{0.1.20}\label{c3.0.1.20}\label{c4.0.1.16}
\\
e_{j}r_{i}e_{j}&=&\delta e_{j},                                                \label{0.1.12}\label{c3.0.1.12} \label{c4.0.1.17}
\\
e_{j}e_ie_{j}&=&\delta e_{j}   ,                                                 \label{0.1.16}\label{c3.0.1.16}\label{c4.0.1.18}
\\
e_{j}r_i r_{j}&=&e_{j}r_{i}     ,                                                     \label{0.1.17} \label{c3.0.1.17} \label{c4.0.1.19}
\\
 e_{j}e_ir_{j}&=&e_{j}e_i        ;                                                    \label{0.1.18} \label{c3.0.1.18}\label{c4.0.1.20}
\end{eqnarray}
for $i$, $j\in M$ and $\vtriple{\scriptstyle i}\over\circ\over{}
\kern-4pt{\tlijntje\kern -25pt<}\kern 12pt\vtriple{\scriptstyle j}\over\circ\over{}
\kern-1pt$ ,
\begin{eqnarray}
r_ie_je_i&=&r_je_i,   \label{c7.0.1.7}
\\
 e_ie_jr_i&=&e_ir_j,     \label{c7.0.1.8}
 \\
e_jr_ie_jr_ie_j&=&e_j,                                                \label{c7.0.1.12}
\\
      e_jr_ie_jr_ir_j&=&e_jr_ir_jr_i,         \label{c7.0.1.13}
\\
e_ir_je_i&=&\delta^2 e_i                                \label{c7.0.1.14}
\\
r_jr_ie_jr_ie_j&=&r_ir_jr_ie_j. \label{c7.0.1.16}
\\
(r_jr_i)^6&=&1.    \label{c7.0.1.17}
\end{eqnarray}
The parameter $\kappa_i\in \N$ is given  below, \\
for type $\ddC_n$, $\kappa_0=1$, $\kappa_i=2$ for $1\le i\le n-1$;\\
for type $\ddB_n$, $\kappa_0=2$, $\kappa_i=1$ for $1\le i\le n-1$;\\
for type $\ddF_4$, $\kappa_1=\kappa_2=2$, $\kappa_3=\kappa_4=1$;\\
for type $\ddG_2$, $\kappa_0=3$, $\kappa_1=1$.\\
If $R = \Z[\delta^{\pm1}]$ we write
$\Br(M)$ instead of $\Br(M,R,\delta)$ and speak of  the
Brauer algebra of type $M$.  The submonoid of the multiplicative
monoid of $\Br(M)$ generated by $\delta$, $\delta^{-1}$ and $\{r_i,e_i\}_{i\in M}$
 is denoted by
$\BrM(M)$. It is the monoid of monomials in $\Br(M)$ and will be
called the Brauer monoid of type $M$.
\end{defn}
Recall the Lemma 4.1 from \cite{CLY2010} in  the following.
\begin{lemma}\label{furtherrel}
 If $i$, $j\in M$ and $\vtriple{\scriptstyle i}\over\circ\over{}
\kern-4pt{\dlijntje \kern -25pt>}\kern12pt
\vtriple{\scriptstyle j}\over\circ\over{}\kern-1pt$ , the following
equations hold.
\begin{eqnarray}
r_{j}e_i e_{j}&=&e_{i}e_{j}     \label{4.1.2}
\\
e_ie_{j}e_{i}&=&e_ir_{j}e_i  \label{4.1.1}
\\
e_{j}r_{i}r_{j}e_{i}&=&e_{j}e_{i}       \label{4.1.3}
\\
r_ir_{j}e_{i}r_{j}&=&r_{j}e_{i}r_{j}r_i       \label{4.1.4}
\\
e_ir_{j}e_ir_{j}&=&e_ie_{j}e_i                \label{4.1.5}
\end{eqnarray}
\end{lemma}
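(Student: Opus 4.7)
The plan is to derive each of the five identities from the defining relations of Definition \ref{0.1} by direct algebraic manipulation, proceeding in order of difficulty.

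I would first dispose of the two easiest cases. Identity \eqref{4.1.3} $e_j r_i r_j e_i = e_j e_i$ is immediate from \eqref{0.1.17} together with $r_i e_i = e_i$: indeed, $e_j r_i r_j e_i = (e_j r_i r_j) e_i = e_j r_i e_i = e_j e_i$. Identity \eqref{4.1.4} $r_i r_j e_i r_j = r_j e_i r_j r_i$ is obtained by conjugating \eqref{0.1.20} by $r_j$ on both sides: multiplying $(r_j r_i r_j) e_i = e_i (r_j r_i r_j)$ on the left by $r_j$ and on the right by $r_j$ and using $r_j^2 = 1$ immediately yields $r_i r_j e_i r_j = r_j e_i r_j r_i$.

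The crucial step is to establish \eqref{4.1.2} $r_j e_i e_j = e_i e_j$. This is the word-reversed form of the defining relation \eqref{0.1.18} $e_j e_i r_j = e_j e_i$, but word reversal is not a manifest symmetry of the given presentation, so the identity cannot be produced by a single substitution. A useful preparation is the auxiliary identity $e_j = r_i r_j r_i e_j$, which follows by left-multiplying \eqref{0.1.14} $r_j r_i e_j = r_i e_j$ by $r_i$ and applying $r_i^2 = 1$. Combined with $e_i r_i = e_i$, this gives $e_i e_j = e_i r_j r_i e_j$. The proof of \eqref{4.1.2} then proceeds by combining this auxiliary identity with the commutation \eqref{0.1.20}, the braid relation \eqref{0.1.11}, and the quasi-idempotency relations \eqref{0.1.12} and \eqref{0.1.16} to transform $r_j e_i e_j$ into $e_i e_j$.

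Once \eqref{4.1.2} is in hand, the remaining two identities follow quickly. For \eqref{4.1.1} $e_i e_j e_i = e_i r_j e_i$, left-multiply \eqref{0.1.19} by $r_j$ to obtain $e_i r_j e_i = r_j e_i e_j e_i$, then apply \eqref{4.1.2} to the prefix $r_j e_i e_j$ to reach $e_i r_j e_i = e_i e_j e_i$. For \eqref{4.1.5} $e_i r_j e_i r_j = e_i e_j e_i$, right-multiply \eqref{0.1.19} by $r_j$ and use \eqref{0.1.18} to absorb the trailing factor, obtaining $r_j e_i r_j e_i r_j = e_i e_j e_i$; left-multiplying by $r_j$ then gives $e_i r_j e_i r_j = r_j e_i e_j e_i$, which by \eqref{4.1.2} equals $e_i e_j e_i$. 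The main obstacle is \eqref{4.1.2}: since the defining relations are not closed under word reversal, this identity encodes an implicit symmetry of $\Br(M)$ that is not built into the presentation, and its derivation requires a careful interleaving of several defining relations rather than a single substitution.
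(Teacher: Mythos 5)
Your handling of (\ref{4.1.3}) and (\ref{4.1.4}) is correct, and your reductions of (\ref{4.1.1}) and (\ref{4.1.5}) to (\ref{4.1.2}) are valid as conditional arguments. (Note that the paper itself gives no proof of this lemma; it is quoted from \cite{CLY2010}, so the only thing to assess is your derivation.) The genuine gap is exactly at the step you identify as the crux: you never actually derive (\ref{4.1.2}), you only assert that it ``proceeds by combining'' the auxiliary identity $e_j=r_ir_jr_ie_j$ with (\ref{0.1.20}), (\ref{0.1.11}), (\ref{0.1.12}) and (\ref{0.1.16}). That list of ingredients does not suffice. Every manipulation along the lines you sketch terminates in the identity $r_je_ie_j=r_ir_je_ie_j$: for instance $r_je_ie_j=(r_je_ir_j)(r_je_j)=(r_ir_je_ir_jr_i)e_j=r_ir_je_i(r_jr_ie_j)=r_ir_je_i(r_ie_j)=r_ir_je_ie_j$, using (\ref{0.1.20}) and (\ref{0.1.14}); and this relation does not close, since left-multiplying by $r_j$ merely reproduces a consequence of the auxiliary identity. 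The relations you leave out of your ingredient list, (\ref{0.1.18}) and (\ref{0.1.19}), are precisely the ones that couple $e_i$ and $e_j$ asymmetrically, and at least one of them must enter any derivation of (\ref{4.1.2}); as it stands, your whole proof rests on an unproven foundation, since (\ref{4.1.1}) and (\ref{4.1.5}) are derived from (\ref{4.1.2}).

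A chain that does close, and which reverses your logical order by obtaining (\ref{4.1.1}) first, runs as follows. Left-multiplying (\ref{0.1.18}) by $e_i$ gives $e_ie_je_ir_j=e_ie_je_i$, and substituting (\ref{0.1.19}) on both sides gives $r_je_ir_je_ir_j=r_je_ir_je_i$, hence $e_ir_je_ir_j=e_ir_je_i$. Next, left-multiplying (\ref{0.1.19}) by $e_i$ gives $e_ir_je_ir_je_i=e_i^2e_je_i=\delta^{\kappa_i}e_ie_je_i$, while the left-hand side equals $(e_ir_je_ir_j)e_i=(e_ir_je_i)e_i=\delta^{\kappa_i}e_ir_je_i$ by the previous identity; cancelling $\delta^{\kappa_i}$ yields $e_ir_je_i=e_ie_je_i$, which is (\ref{4.1.1}), and together with the absorbed $r_j$ it is (\ref{4.1.5}). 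Consequently $r_j(e_ie_je_i)=r_j(e_ir_je_i)=r_je_ir_je_i=e_ie_je_i$ by (\ref{0.1.19}), so $r_j$ absorbs on the left of $e_ie_je_i$. Finally, by (\ref{0.1.16}), $r_je_ie_j=\delta^{-1}r_je_i(e_je_ie_j)=\delta^{-1}(r_je_ie_je_i)e_j=\delta^{-1}e_i(e_je_ie_j)=e_ie_j$, which is (\ref{4.1.2}). You should either supply a derivation of this kind or restructure your argument so that (\ref{4.1.2}) is a consequence rather than the unproven starting point.
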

For the triple bond, we have some results in Lemma \ref{c7.2}.
%
%

Similar to the case for Brauer algebras of simply laced types,
there is a natural anti-involution on $\Br(M,R,\delta)$  linearly induced by
$$x_1 x_2\ldots x_n\mapsto x_n\ldots x_2 x_1$$
with each $x_i$ being the generator of $\Br(M,R,\delta)$.

\begin{prop} \label{prop:opp}\label{c3.prop:opp}\label{c4.prop:opp}\label{c7.rem:opp}\label{c7.prop:opp}
The identity map on
$\{\delta, r_i, e_i \mid i = 0,\ldots,n-1\}$ extends to a unique
anti-involution on the Brauer algebra $\Br(M,R,\delta)$.
\end{prop}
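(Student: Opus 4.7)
The plan is to first define a candidate anti-homomorphism $\phi$ on the free $R$-algebra $\cF=R\langle r_i,e_i\mid i\in M\rangle$ by setting $\phi(x_1x_2\cdots x_k)=x_k\cdots x_2x_1$ on words in the generators and extending $R$-linearly, and then verify that $\phi$ descends to $\Br(M,R,\delta)$. On $\cF$, the map $\phi$ is clearly an $R$-linear anti-automorphism which fixes every generator and is involutive by construction. Descent to $\Br(M,R,\delta)$ amounts to verifying, for each defining relation $u=v$ of Definition~\ref{0.1}, that $\phi(u)=\phi(v)$ holds in $\Br(M,R,\delta)$. Once this is done, uniqueness is immediate because an anti-homomorphism on an algebra generated by $\{r_i,e_i\}_{i\in M}$ is determined by its values on these generators.

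The verification is a case-by-case inspection. A large group of the relations is manifestly invariant under word reversal: the quadratic relation (\ref{0.1.3}), the absorption (\ref{0.1.4}), the idempotent relation (\ref{0.1.5}), the commutation relations (\ref{0.1.7})--(\ref{0.1.9}) for non-adjacent nodes, the braid relations (\ref{0.1.10}) and (\ref{0.1.11}), the palindromic identities (\ref{0.1.12}), (\ref{0.1.15}), (\ref{0.1.16}), (\ref{0.1.20}), the triple-bond palindromes (\ref{c7.0.1.12}), (\ref{c7.0.1.14}), and the order-six relation (\ref{c7.0.1.17}) (equivalent to its reverse in any group since $(r_jr_i)^{-1}=r_ir_j$). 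A second group consists of defining relations that appear in pairs of mutual reverses already inside the definition itself: (\ref{0.1.14}) pairs with (\ref{0.1.17}), (\ref{c7.0.1.7}) pairs with (\ref{c7.0.1.8}), and (\ref{c7.0.1.13}) pairs with (\ref{c7.0.1.16}). The reverses of (\ref{0.1.18}) and (\ref{0.1.19}) are exactly the identities (\ref{4.1.2}) and (\ref{4.1.5}) of Lemma~\ref{furtherrel}, hence already available as consequences of the defining relations.

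The only residual case is the reverse of the simple-bond relation (\ref{0.1.13}), namely $e_jr_ir_j=e_je_i$ for $i\sim j$. This I would derive as follows: applying (\ref{0.1.13}) with the roles of $i$ and $j$ interchanged gives $e_je_i=r_ir_je_i$, while (\ref{0.1.15}) together with $r_i^2=r_j^2=1$ yields $e_i=r_jr_ie_jr_ir_j$. Substituting the latter into the former and collapsing the adjacent $r_jr_j$ and $r_ir_i$ pairs produces $e_je_i=e_jr_ir_j$, which is the required reverse. The main obstacle in the whole proof is therefore just the bookkeeping of which defining relations are self-reverse, which occur in matched pairs, and which need derivation; the single substantive manipulation is this short three-step calculation.
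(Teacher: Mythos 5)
Your proposal is correct and follows essentially the same route as the paper: a relation-by-relation check that each defining relation of Definition~\ref{0.1} is preserved under word reversal, observing that most are palindromic or occur in op-dual pairs ((\ref{0.1.14})/(\ref{0.1.17}), (\ref{c7.0.1.7})/(\ref{c7.0.1.8}), (\ref{c7.0.1.13})/(\ref{c7.0.1.16})), that the reverses of (\ref{0.1.18}) and (\ref{0.1.19}) are (\ref{4.1.2}) and (\ref{4.1.5}) of Lemma~\ref{furtherrel}, and that the reverse of (\ref{0.1.13}) is derived from (\ref{0.1.15}), (\ref{0.1.3}) and (\ref{0.1.13}) — exactly the ingredients the paper cites, which you merely spell out in more detail.
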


\begin{proof}
It suffices to check the defining relations given in Definition \ref{1.1}
still hold under the anti-involution.  An easy inspection shows that all
relations involved in the definition are invariant under $\op$, except for
(\ref{0.1.13}), (\ref{0.1.14}), (\ref{0.1.19}), (\ref{0.1.17}),
(\ref{0.1.18}), (\ref{c7.0.1.7}), (\ref{c7.0.1.8}), (\ref{c7.0.1.13}), and (\ref{c7.0.1.16}).  The relation obtained by applying $\op$ to (\ref{0.1.13})
holds as can be seen by using (\ref{0.1.15}) followed by (\ref{0.1.3})
together with (\ref{0.1.13}).  The equality (\ref{0.1.17}) is the op-dual of
(\ref{0.1.14}). Finally, (\ref{4.1.2}), (\ref{4.1.5}), (\ref{c7.0.1.7}), and  (\ref{c7.0.1.13}) state  the $\op$-dual of (\ref{0.1.18}), (\ref{0.1.19}),  (\ref{c7.0.1.8}), and  (\ref{c7.0.1.16}), respectively. Hence our claim holds.
\end{proof}
This anti-involution is denoted by the superscript $\op$, so the map is given by $x\mapsto x^{\op}$.\\
Let $M\in {\rm BCFG}$,  $Q\in {\rm ADE}$ be the corresponding type in the first column and the third
 column in Table \ref{mainresults}, $\sigma$ be the nontrivial diagram automorphism
to obtain $M$ in \cite{T1959}, which is
 \begin{enumerate}[(i)]
 \item $\sigma=\prod_{i=1}^{n-1}(i,2n-i)$ for $\ddC_n$ in $\ddA_{2n-1}$,
 \item $\sigma=(1,2)$ for $\ddB_n$ in $\ddD_{n+1}$,
 \item $\sigma=(1,6)(3,5)$ for $\ddF_4$ in $\ddE_6$,
 \item $\sigma=(1,2,4)$ for $\ddG_2$ in $\ddD_4$,
 \end{enumerate}
 and $\cA_\sigma$ be the subset of $\sigma$-invariant admissible root sets of $\cA$ under $\sigma$. \\It is well known that the image of $\{\alpha_i\mid i\in Q\}$ under the  map
 $\alpha_i\mapsto \sum_{t=1}^{|\sigma|}\sigma^t(\alpha_i)/|\sigma|$, namely
$$\left\{\sum_{t=1}^{|\sigma|}\sigma^t(\alpha_i)/|\sigma|\quad\mid i\in Q\right\}$$ consists of  the simple roots of type $M$, where $|\sigma|$ is the order of
$\sigma$.
We extend the map  to a linear map $\fp: \R^{|Q|}\rightarrow \R^{|Q|}_\sigma= \R^{|M|}$,
$x\mapsto \sum_{t=1}^{|\sigma|}\sigma^t(x)/|\sigma|$.  Let $\Phi^+$ be the positive roots of type $Q$
with respect to $\{\alpha_i\}_{i\in Q}$ and $\Psi^+$ be the positive roots of type $M$ with respect to   $\{\sum_{t=1}^{|\sigma|}\sigma^t(\alpha_i)/\mid\sigma|\mid i\in Q \}$.
\\We next consider particular sets of mutually orthogonal positive roots in
$\Psi^+$, and relate them to symmetric admissible sets in $\cA$.

\begin{defn}\label{df:admissible}\index{\emph{admissible}}
Denote by $\cB'$ the collection of all sets of mutually orthogonal roots in
$\Psi^+$ and by $\cA_\sigma$ the subset of $\sigma$-invariant elements of
$\cA$.  As $\fp$ sends positive roots of $\Phi$ to positive roots of $\Psi$,
it induces a map $\fp : \cA_\sigma\to\cB'$ given by $\fp(B) =
\left\{\fp(\alpha)\mid \alpha\in B\right\}$ for $B\in \cA_\sigma$.  An
element of $\cB'$ will be called  admissible if it lies in the image of an admissible set
under
$\fp$. The set of all admissible elements of $\cB'$ will be denoted $\cB$.
\end{defn}
\begin{defn}\index{\emph{admissible closure}}
Suppose that $X\subset \Psi^+$ is a mutually orthogonal root set. If $X$ is
a subset of some admissible root set, then the minimal admissible set
containing $X$ is called the \emph{admissible closure} of $X$, denoted by
$\overline{X}$ or $X^{\rm cl}$.
\end{defn}
Some examples can be seen in \cite[Remark 5.4]{CLY2010}.

Recall for any $\beta\in \Psi^+$, the height of $\beta$ is the sum of coefficients of the simple roots if we write
$\beta$ as the linear combination of simple roots. For  the root
systems of type $\rm{BCFG}$, under the action of corresponding Weyl group, there are two orbits in $\Psi$, distinguished  by
the Euclidean length of roots.
\begin{lemma}\label{lm:SimpleRootRels}\label{c3.lm:SimpleRootRels}\label{c7.N_i}
 Let  $i\in M \in {\rm BCFG}$ be a node adjacent to another with double or triple bond.
Let $\beta\in \Psi^+$ be of the maximal height on the $W(M)$-orbit of $\beta_i$ and $r\in W(M)$ be the unique element of minimal length such that $\beta=r\beta_i$.
Let $e_{\beta}=re_ir^{-1}$. Then for each element $x$ in the stabilizer of $\beta$ in $W(M)$ (restricted to $\Psi^+$), we have
$$e_{\beta}=xe_{\beta}x^{-1}.$$
\end{lemma}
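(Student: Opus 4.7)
The plan is to translate the identity $e_\beta = xe_\beta x^{-1}$ via the formula $e_\beta = re_ir^{-1}$ and then exploit the max-height hypothesis to pin down a convenient generating set of the stabilizer. Since the action of $W(M)$ on $\Psi^+$ described in Remark~\ref{rem:positiveaction} is a genuine group action, $W(M)_\beta = r\,W(M)_{\beta_i}\,r^{-1}$. Writing $x = ryr^{-1}$ with $y\in W(M)_{\beta_i}$, the claim becomes $y e_i y^{-1} = e_i$ for each $y$ in the positive-root-set stabilizer of $\beta_i$. Equivalently, it suffices to verify the identities $r_\beta e_\beta = e_\beta r_\beta$ and $r_j e_\beta = e_\beta r_j$ for the elements $x = r_\beta$ and $x = r_j$ running over a chosen generating set of $W(M)_\beta$.

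The max-height hypothesis yields such a generating set. Since no simple reflection raises the height of $\beta$, we have $\langle\beta,\alpha_j^\vee\rangle\ge0$ for every $j\in M$, so $\beta$ is dominant. By the standard theorem that the stabilizer of a dominant vector is a standard parabolic subgroup (\cite[Ch.~V]{B2002}), the pointwise stabilizer of $\beta$ equals $W_J=\langle r_j : j\in J\rangle$, where $J=\{j\in M : r_j\beta = \beta\}$. Adjoining $r_\beta$, which sends $\beta$ to $-\beta$ and thus fixes $\{\beta\}$ under the positive-root-set action, produces the generating set $\{r_\beta\}\cup\{r_j : j\in J\}$ of $W(M)_\beta$.

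For the first identity, conjugating \eqref{0.1.4} by $r$ gives $r_\beta e_\beta = e_\beta = e_\beta r_\beta$ directly. For the second, I would argue by induction on the height of $\beta$. The base case $\beta=\beta_i$ reduces to $r_j e_i = e_i r_j$ whenever $j\not\sim i$, which is exactly \eqref{0.1.8}. In the inductive step I pick a simple root $\alpha_k$ with $r_k\beta$ of strictly smaller height in the same orbit, write $e_\beta = r_k e_{r_k\beta}r_k$, and reformulate the claim $r_je_\beta = e_\beta r_j$ as a commutation statement between $r_ke_{r_k\beta}$, $r_k$ and $r_j$. The defining relations of Definition~\ref{0.1} together with Lemma~\ref{furtherrel} are then used to bring this into the form required by the inductive hypothesis for the root $r_k\beta$.

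The principal obstacle is this inductive step in the case where the edge joining $j$ and $k$ is a double or triple bond, since then $r_kr_jr_k$ is not a simple reflection and need not lie in the canonical generating set for the stabilizer of $r_k\beta$. One must absorb it using the multi-bond relations \eqref{0.1.14}, \eqref{0.1.17}, \eqref{c7.0.1.13}, \eqref{c7.0.1.16}, and the hypothesis that $i$ lies next to a multi-bond localizes where this delicate interaction can occur. In the $\ddG_2$ setting the pointwise stabilizer of every root is trivial, so $J=\emptyset$ and only the identity $r_\beta e_\beta = e_\beta r_\beta$ intervenes; the work for $\ddB_n$, $\ddC_n$ and $\ddF_4$ reduces to a short case analysis driven by the Dynkin geometry.
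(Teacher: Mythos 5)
Your structural setup is sound and in fact mirrors the paper's: the stabilizer of the maximal (hence dominant) root $\beta$ under the action on $\Psi^+$ is generated by $r_\beta$ together with the simple reflections $r_j$ satisfying $\alpha_j\perp\beta$ (the paper obtains the same generating set from the extended Dynkin diagram via Brink--Howlett), and the case $x=r_\beta$ is correctly disposed of by conjugating $r_ie_i=e_ir_i=e_i$ by $r$. But there is a genuine gap exactly where you flag ``the principal obstacle'': the verification of $r_je_\beta r_j=e_\beta$ when the conjugation has to pass through the double or triple bond is the entire content of the lemma, and you do not carry it out. In the paper this is the bulk of the proof --- e.g.\ for type $\ddC_n$ the step $r_0e_{-1}r_0=e_{-1}$ requires chaining (\ref{0.1.15}), (\ref{0.1.11}) and (\ref{0.1.20}) through the segment $r_0r_1r_0r_1e_2r_1r_0r_1r_0$, and the $r_{n-1}$ case bottoms out at the identity $r_1r_0e_1r_0r_1=r_0e_1r_0$, proved from (\ref{0.1.14}) and (\ref{0.1.17}). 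Moreover, as you yourself observe, the induction does not close as stated: after writing $e_\beta=r_ke_{r_k\beta}r_k$, the element $r_kr_jr_k$ stabilizes $r_k\beta$ but is generally not a simple reflection, and for a non-maximal root the stabilizer is not a standard parabolic, so an inductive hypothesis of the form ``simple reflections fixing $\gamma$ commute with $e_\gamma$'' does not apply to it. Naming the relations that ``must absorb'' this is a plan, not a proof.

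There is also a concrete error: the claim that in type $\ddG_2$ the pointwise stabilizer of every root is trivial is false. With the paper's normalization $(\beta_0,\beta_0)=2/3$, $(\beta_1,\beta_1)=2$, $(\beta_0,\beta_1)=-1$, the highest long root $3\beta_0+2\beta_1$ is orthogonal to $\beta_0$ and the highest short root $2\beta_0+\beta_1$ is orthogonal to $\beta_1$; so $J\neq\emptyset$ in both orbits, and the nontrivial commutations $(r_1r_0r_1r_0r_1)e_0(r_1r_0r_1r_0r_1)=e_0$ and $(r_0r_1r_0r_1r_0)e_1(r_0r_1r_0r_1r_0)=e_1$ must be verified from the triple-bond relations --- this is precisely what the paper does in Lemma \ref{N_i}. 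So the $\ddG_2$ case does not collapse to $r_\beta e_\beta=e_\beta r_\beta$, and the ``short case analysis'' you defer for $\ddB_n$, $\ddC_n$, $\ddF_4$ is where the lemma actually lives.
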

\begin{proof}
Consider it first for type $\ddC_n$ (\cite{CLY2010}). \\
There are two $W(\ddC_n)$-orbits with representatives $\beta_0$ and $\beta_1$, and  we suppose that $\beta_{-2}\in W(\ddC_n)\beta_0 $ and $\beta_{-1}\in W(\ddC_n)\beta_1$.\\
Let $i=1$ and $\beta=\beta_{-1}$.
  It is known that the extended Dynkin diagram of type $\ddC_n$ (just consider the group structure) arises as follows if we add $\beta_{-1}$ to the Dynkin diagram of type $\ddC_n$.
  $$\EBn$$
 According to \cite{BH1999}, the stabilizer of $\beta_{-1}$  in $W(\ddC_n)$ is generated by reflections in $W(\ddC_n)$ whose roots correspond to   nodes in the extended Dynkin diagram of type $\ddB_n$ which are nonadjacent to $-1$, as we just consider the action on positive roots,  the
stabilizer of $\{\beta_1\}$ is
$$N_{-1}=\left<r_0,r_1,\ldots, r_{n-3}, r_{n-1}, r_{-1} \right>,$$
where $r_{-1}=r_{\beta_{-1}}=rr_1r^{-1}$ and $r=r_{n-2}r_{n-1}r_{n-3}r_{n-2}\cdots r_2r_3r_1r_2r_0$.
Hence it suffices to prove the lemma holds for each generator of $N_{-1}$.
For $r_{-1}$,
\begin{eqnarray*}
r_{-1}e_{-1}&=&rr_1r^{-1}re_1r^{-1}=rr_1e_1r^{-1}\overset{(\ref{0.1.4})}{=}e_{-1},\\
e_{-1}r_{-1}&=&re_1r^{-1}rr_1r^{-1}=re_1r_1r^{-1}\overset{(\ref{0.1.4})}{=}e_{-1}.
\end{eqnarray*}
For $r_{n-1}$, we have
\begin{eqnarray*}
r_{n-1}e_{-1}r_{n-1}&=&(r_{n-1}r_{n-2}r_{n-1})r_{n-3}r_{n-2}\cdots e_1\cdots r_{n-2}r_{n-3}(r_{n-1}r_{n-2}r_{n-1})\\
&\overset{(\ref{0.1.10})}{=}&r_{n-2}r_{n-1}(r_{n-2}r_{n-3}r_{n-2}\cdots e_1\cdots r_{n-2}r_{n-3}r_{n-2})r_{n-1}r_{n-2},
\end{eqnarray*}
by induction, therefore it is reduced to prove that
\begin{eqnarray*}r_1r_0e_1r_0r_1=r_0e_1r_0,
\end{eqnarray*}
which holds for
\begin{eqnarray*}
(r_1r_0e_1)r_0r_1\overset{(\ref{0.1.14})}{=}r_0(e_1r_0r_1)\overset{(\ref{0.1.17})}{=}r_0e_1r_0.
\end{eqnarray*}
For $r_0$, we have
\begin{eqnarray*}
r_0e_{-1}r_0&=&r_0r_{n-2}r_{n-1}r_{n-3}r_{n-2}\cdots e_1\cdots r_{n-2}r_{n-3}r_{n-1}r_{n-2}r_0\\
&\overset{(\ref{0.1.7})}{=}& r_{n-2}r_{n-1}\cdots r_0r_1(r_2r_0)e_1(r_0r_2)r_1r_0\cdots r_{n-1}r_{n-2}\\
&\overset{(\ref{0.1.7})}{=}& r_{n-2}r_{n-1}\cdots r_0r_1r_0(r_2e_1r_2)r_0r_1r_0\cdots r_{n-1}r_{n-2}\\
&\overset{(\ref{0.1.15})}{=}& r_{n-2}r_{n-1}\cdots (r_0r_1r_0r_1)e_2(r_1r_0r_1r_0)\cdots r_{n-1}r_{n-2}\\
&\overset{(\ref{0.1.11})}{=}& r_{n-2}r_{n-1}\cdots r_1r_0r_1(r_0e_2r_0)r_1r_0r_1\cdots r_{n-1}r_{n-2}\\
&\overset{(\ref{0.1.8})+(\ref{0.1.3})}{=}& r_{n-2}r_{n-1}\cdots r_1r_0(r_1e_2r_1)r_0r_1\cdots r_{n-1}r_{n-2}\\
&\overset{(\ref{0.1.15})}{=}& r_{n-2}r_{n-1}\cdots r_1(r_0r_2)e_1(r_2r_0)r_1\cdots r_{n-1}r_{n-2}\\
&\overset{(\ref{0.1.7})}{=}& r_{n-2}r_{n-1}\cdots r_1r_2r_0e_1r_0r_2r_1\cdots r_{n-1}r_{n-2}=e_{-1}.
\end{eqnarray*}
For $r_{n-3}$, we have
\begin{eqnarray*}
r_{n-3}e_{-1}r_{n-3}&=&(r_{n-3}r_{n-2}r_{n-1}r_{n-3})r_{n-2}\cdots e_1\cdots r_{n-2}(r_{n-3}r_{n-1}r_{n-2}r_{n-3})\\
&\overset{(\ref{0.1.7})+(\ref{0.1.10})}{=}&r_{n-2}r_{n-3}(r_{n-2}r_{n-1}r_{n-2})\cdots e_1\cdots (r_{n-2}r_{n-1}r_{n-2})r_{n-3}r_{n-2}\\
&\overset{(\ref{0.1.10})}{=}&r_{n-2}r_{n-3}r_{n-1}r_{n-2}(r_{n-1}\cdots e_1\cdots r_{n-1})r_{n-2}r_{n-1}r_{n-3}r_{n-2}\\
&\overset{(\ref{0.1.7})+(\ref{0.1.8})}{=}&r_{n-2}r_{n-3}r_{n-1}r_{n-2}\cdots e_1\cdots (r_{n-1}r_{n-1})r_{n-2}r_{n-1}r_{n-3}r_{n-2}\\
&\overset{(\ref{0.1.3})}{=}&r_{n-2}(r_{n-3}r_{n-1})r_{n-2}\cdots e_1\cdots r_{n-2}(r_{n-1}r_{n-3})r_{n-2}\\
&\overset{(\ref{0.1.3})}{=}&r_{n-2}r_{n-1}r_{n-3}r_{n-2}\cdots e_1\cdots r_{n-2}r_{n-3}r_{n-1}r_{n-2}=e_{-1}.
\end{eqnarray*}
For any $r_i$ with $1\leq i\leq n-4$, we can prove the required equality  by induction as $r_{n-3}$. \\
Let $i=0$ and $\beta=\beta_{-2}$.
 It is known  that  the extended Dynkin diagram of type $\ddC_n$  arises if we add $\beta_{-2}$ to the Dynkin diagram of type $\ddC_n$.
$$\ECn$$
 Similarly, the stabilizer of $\{\beta_{-2}\}$ is
 $$N_{-2}=\left<r_0,r_1,\ldots, r_{n-3}, r_{n-2}, r_{-2} \right>,$$
 where $r_{-2}=r_{\beta_{-2}}=rr_1r^{-1}$ and $r=r_{n-1}r_{n-2}\cdots r_3r_2r_1$.\\
 Hence it also suffices to prove the lemma holds for each generator of $N_{-2}$. \\
It can be easily verified that $r_{-2}e_{-2}=e_{-2}r_{-2}=e_{-2}$ for $r_{-2}$.\\
 For $r_{n-2}$, we have
 \begin{eqnarray*}
 r_0e_{-2}r_0&=&(r_0r_{n-1}r_{n-2}\cdots r_2)r_1e_0r_1(r_2\cdots r_{n-2}r_{n-1}r_0)\\
             &\overset{(\ref{0.1.7})}{=}&r_{n-1}r_{n-2}\cdots r_2r_0r_1e_0r_1r_0r_2\cdots r_{n-2}r_{n-1}\\
             &\overset{(\ref{0.1.3})}{=}&r_{n-1}r_{n-2}\cdots r_2r_1(r_1r_0r_1e_0)r_1r_0r_1r_1r_2\cdots r_{n-2}r_{n-1}\\
             &\overset{(\ref{0.1.20})}{=}&r_{n-1}r_{n-2}\cdots r_2r_1e_0(r_1r_0r_1r_1r_0r_1)r_1r_2\cdots r_{n-2}r_{n-1}\\
             &\overset{(\ref{0.1.3})}{=}&r_{n-1}r_{n-2}\cdots r_2r_1e_0r_1r_2\cdots r_{n-2}r_{n-1}=e_{-2}.
 \end{eqnarray*}
 For $r_{n-2}$, we have
 \begin{eqnarray*}
 r_{n-2}e_{-2}r_{n-2}&=&(r_{n-2}r_{n-1}r_{n-2})\cdots r_2r_1e_0r_1r_2\cdots (r_{n-2}r_{n-1}r_{n-2})\\
 &\overset{(\ref{0.1.10})}{=}&r_{n-1}r_{n-2}(r_{n-1}\cdots r_2r_1e_0r_1r_2\cdots) r_{n-1}r_{n-2}r_{n-1}\\
 &\overset{(\ref{0.1.7})+(\ref{0.1.8})}{=}&r_{n-1}r_{n-2}\cdots r_3r_2r_1e_0r_1r_2\cdots (r_{n-1}r_{n-1})r_{n-2}r_{n-1}\\
 &\overset{(\ref{0.1.7})+(\ref{0.1.8})}{=}&r_{n-1}r_{n-2}\cdots r_3r_2r_1e_0r_1r_2\cdots r_{n-2}r_{n-1}=e_{-2}.
 \end{eqnarray*}
 Therefore we prove the lemma for $M=\ddC_n$.\\
 If $M=\ddB_n$, the argument is nearly the same as that  for $\ddC_n$, even the formulas for $e_{-1}$ and $e_{-2}$ are kept. The only difference is choosing  formulas from (\ref{0.1.11})--(\ref{0.1.18}) carefully for the alternative relations between $0$ and $1$. \\
 For $M=\ddF_4$ and $\ddG_2$, it is quite similar with the corresponding extended
 Dynkin diagrams changed as below. We will do this for type $\ddG_2$ in Section \ref{sectnormformG2}.
 $$\Fo$$
 $$\Ft$$
 $$\Go$$
 $$\Gf$$
  \end{proof}
 Because each element in the positive root system of type $M$ must be on the Weyl group $W(M)$-orbits of the some simple root,   then we can
 define
 \begin{eqnarray}\label{genebeta}
we_{\beta}w^{-1}=e_{w\beta},
\end{eqnarray}
for $w\in W(M)$ and $\beta$ a root
of $W(M)$ for $M \in {\rm BCFG}$, which is well defined due to the Lemma \ref{lm:SimpleRootRels}.
 Note that $e_{\beta}=e_{-\beta}$.

We continue by discussing some desirable properties of the Brauer algebra
$\Br(M)$ for $M\in {\rm BCFG}$. First of all, for any disjoint union $J$ of diagrams of  simply laced type
$Q$,  the Brauer algebra is defined as the direct product of the
Brauer algebras whose types are the components of $J$. The parabolic property for
$M\in {\rm ADE}$ has been proved in \cite{CW2011}, which says that the subdiagram relation
implies the subalgebra relation. The next result
states that parabolic subalgebras of $\Br(M)$ behave well with the hypothesis that the subalgebra relation in
Table \ref{mainresults}. Here we define a parabolic subalgebra $A$ of $\Br(M)$ is a subalgebra generated by reflections in
in $W(M)$ and the corresponding Temperley-Lieb elements, namely $A$ is generated by $r_{\beta}$
and $e_{\beta}$ for  $\beta\in S$, $S$ a subset of $\Psi^+$. For the classical theory in \cite{B2002}, it is known that  a parabolic
subgroup of Weyl group  is  conjugate to a subgroup genrated by subdiagram of the Dynkin diagram,   by (\ref{genebeta}), then it follows that
a parabolic subalgebra of $\Br(M)$  is conjugate to some parabolic subalgebra generated by the reflections and  the Temperley-Lieb elements
of some subdiagram of $M$.
\begin{prop}\label{prop:parabolic}
Let $J$ be a set of nodes of the Dynkin diagram $M\in {\rm BCFG}$.
Then the parabolic subalgebra of the Brauer algebra $\Br(M)$,
that is, the subalgebra generated by $\{r_j,e_j\}_{j\in J}$,
is isomorphic to the Brauer algebra of type $J$.
\end{prop}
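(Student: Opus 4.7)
The plan is to construct the natural homomorphism $\phi:\Br(J)\to \Br(M)$ sending each generator $r_j$ (resp.\ $e_j$) of $\Br(J)$ to the generator $r_j$ (resp.\ $e_j$) of $\Br(M)$ for $j\in J$, and to argue it is an isomorphism onto the parabolic subalgebra. First I would verify that $\phi$ is well defined: since $J$ is a subdiagram of $M$, the bond labels between any two nodes $i,j\in J$ and the parameters $\kappa_j$ are inherited unchanged from $M$, so every defining relation of $\Br(J)$ in Definition \ref{0.1} is literally a defining relation of $\Br(M)$. Hence $\phi$ extends to an algebra homomorphism, and its image is by definition the subalgebra generated by $\{r_j,e_j\}_{j\in J}$, giving surjectivity onto the parabolic subalgebra.

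The remaining task is injectivity of $\phi$. For this I would invoke the subalgebra embedding $\Br(M)\hookrightarrow \Br(Q)$ asserted in the BSO column of Table \ref{mainresults}, where $Q\in\mathrm{ADE}$ is the simply laced parent corresponding to the folding $\sigma$. Under $\sigma$, the subdiagram $J\subseteq M$ lifts to a $\sigma$-stable subdiagram $\widetilde{J}$ of $Q$: components of $J$ of simply laced type lift to isomorphic copies fixed pointwise by $\sigma$, and components of $J$ of type $\mathrm{BCFG}$ lift to their simply laced parents in $Q$. The parabolic property for simply laced Brauer algebras from \cite{CW2011} gives an embedding $\Br(\widetilde{J})\hookrightarrow \Br(Q)$, and applying the Table \ref{mainresults} embeddings component-wise to the $\mathrm{BCFG}$ components of $J$ produces an embedding $\Br(J)\hookrightarrow \Br(\widetilde{J})$. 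Composing yields an injection $\Br(J)\hookrightarrow \Br(Q)$, and the key point is that this composite should agree with $\Br(J)\xrightarrow{\phi}\Br(M)\hookrightarrow \Br(Q)$; once this is checked, injectivity of $\phi$ follows at once.

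The main obstacle is verifying this compatibility of the two composites $\Br(J)\to \Br(Q)$. Concretely, one must show that for each $j\in J$ the elements $r_j,e_j\in \Br(J)$ are sent to the same element of $\Br(Q)$ along both paths. Using the explicit folding $\alpha_i\mapsto \fp(\alpha_i)=\sum_{t}\sigma^t(\alpha_i)/|\sigma|$, the description of the BSO embedding, the relation (\ref{genebeta}) $we_\beta w^{-1}=e_{w\beta}$, and the well-definedness of $e_\beta$ guaranteed by Lemma \ref{lm:SimpleRootRels}, this reduces to a generator-level case analysis: simply laced components of $J$ contribute subsets of the generators of $\Br(Q)$, while $\mathrm{BCFG}$ components contribute folded expressions whose compatibility is built into the construction of the BSO embedding. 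Once this routine but somewhat lengthy case-check is carried out, the proposition follows.
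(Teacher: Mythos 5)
Your proposal is correct in outline but takes a genuinely different route from the paper's proof of this proposition. The paper argues via the concrete diagram representation of $\Br(\ddC_n)$ from \cite{CLY2010}: it reduces by induction on $n-|J|$ and restriction to connected components to the two maximal parabolics $J=\{1,\ldots,n-1\}$ (type $\ddA_{n-1}$) and $J=\{0,\ldots,n-2\}$ (type $\ddC_{n-1}$), and in each case matches the symmetric Brauer diagrams satisfying a geometric constraint (no strand crossing the middle vertical line, respectively fixed vertical end strands) bijectively with the diagrams of the smaller algebra; the remaining types are asserted to be similar. Your route instead folds everything back to the simply laced parent: inject $\Br(M)$ into $\Br(Q)$ via the BSO column of Table \ref{mainresults}, lift $J$ to the $\sigma$-stable subdiagram $\widetilde J$ of $Q$, use the parabolic theorem of \cite{CW2011} for $\Br(\widetilde J)\hookrightarrow\Br(Q)$, and chase the commutative square. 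This is precisely the mechanism the paper itself deploys in the Remark immediately following the proposition for $\ddH_3$ and $\ddH_4$, so it is a legitimate and arguably more uniform argument; the compatibility of the two composites is essentially built in, since both embeddings are defined on generators by the same orbit-product formula $r_j\mapsto\prod_t\sigma^t(R_j)$, $e_j\mapsto\prod_t\sigma^t(E_j)$. What the paper's approach buys is independence from the main embedding theorems (important because for $M=\ddG_2$ the embedding $\Br(\ddG_2)\hookrightarrow\Br(\ddD_4)$ is Theorem \ref{c7.main}, proved only in Sections 8--9; your argument is not circular, since that proof does not invoke the proposition, but you should flag the logical ordering explicitly).

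One step of your write-up is not quite right as stated: the claim that the parameters $\kappa_j$ are ``inherited unchanged,'' so that every defining relation of $\Br(J)$ is literally a relation of $\Br(M)$. For a simply laced component of $J$ the relevant presentation is Definition \ref{1.1}, with $E_j^2=\delta E_j$, whereas the corresponding generators of $\Br(M)$ may satisfy $e_j^2=\delta^2 e_j$ (e.g.\ the nodes $1,\ldots,n-1$ of $\ddC_n$, or the nodes $1,2$ of $\ddF_4$). So the naive assignment $E_j\mapsto e_j$ does not preserve the quadratic relation, and the isomorphism of the proposition must be read with the loop parameter adjusted (to $\delta^{2}$ on such components), exactly as the folding picture predicts since $\phi(e_j)$ is a product of two or three commuting idempotent-like generators. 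This is a bookkeeping issue rather than a fatal gap --- the paper's own diagram-counting proof silently makes the same identification --- but your well-definedness paragraph needs it to go through.
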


\begin{proof}
We just do the example for $M=\ddC_n$, the remaining cases can be verified by similar arguments.
Here we apply the diagram representation of $\Br(\ddC_n)$ from \cite{CLY2010}.
 In fact, the algebra  $\Br(\ddB_n)$ has a diagram representation inherited from $\Br(\ddD_{n+1})$ (\cite{CL2011}). \\
In view of induction on $n-|J|$ and restriction to connected components of
$J$, it suffices to prove the result for $J = \{1,\ldots,n-1\}$ and for
$J=\{0,\ldots,n-2\}$. In the former case, the type is $\ddA_{n-1}$ and the
statement follows from the observation that the symmetric diagrams without
strands crossing the vertical line through the middle of the segments
connecting the dots $(n,1)$ and $(n+1,1)$ are equal in number to the Brauer
diagrams on the $2n$ nodes (realized to the left
of the vertical line). In the latter case, the type is $\ddC_{n-1}$ and the
statement follows from the observation that the symmetric diagrams with
vertical strands from $(1,1)$ to $(1,0)$ and from $(2n,1)$ to $(2n,0)$ are
equal in number to the symmetric diagrams related to $\BrM(\ddC_{n-1})$.
\end{proof}
In \cite[Theorem 1]{Muehl92}, the nontrivial diagram automorphisms on Dynkin diagrams were generalized as admissible partitions(not related to our
admissible set here). We introduce the definition and a theorem from \cite{Muehl92}. In this paper, we  apply the same idea to obtain the Brauer algebras of type $\ddI_2^n$,  $\ddH_3$, $\ddH_4$ from
Brauer algebras of type $\ddA_{n-1}$, $\ddD_6$, $\ddE_8$, respectively, which are done in \cite{L2014}, \cite{L20132}.

\begin{defn}\label{defn:addpart}
\index{\emph{admissible partition}}
 Let $(W,S)$ be a Coxeter system of type $M$. If $J$ is a spherical subset of $S$,
 we write $w_J$ for the longest element of $W_J$, the parabolic subgroup of
 $W$ generated by $J$. If   $\mathbb{P}$ is a partition of $S$ all of whose parts are spherical,
 \\ we denote $C_W( \mathbb{P})$ the subgroup of $W$ generated by all $w_J$ for $J\in \mathbb{P}$.
  \\ A partition $\mathbb{P}$ of $S$
is called an admissible partition if, for each part $A\in\mathbb{P}$, the Coxeter
diagram restricted to $A$ is spherical and, for each element $w\in C_W( \mathbb{P})$, either
$l(rw)<l(w)$ for all $r\in A$ or $l(rw)>l(w)$ for all $r\in A$.
\end{defn}
\begin{thm} Let $\mathbb{P}$ be an admissible partition of $S$ and write $S_{\mathbb{P}}=\{w_{A}\mid A\in \mathbb{P}\}$.
Then the pair $(C_W(\mathbb{P}), S_{\mathbb{P}})$ is a Coxeter system. Its type is the Coxeter diagram
$M_{\mathbb{P}}$ on $S_{\mathbb{P}}$ whose $A$, $B$-entry is the order of $w_Aw_B$ in W.
\end{thm}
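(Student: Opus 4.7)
The plan is to leverage the descent condition built into Definition \ref{defn:addpart}, which states that each $w\in C_W(\mathbb{P})$ has a well-defined ``sign'' with respect to each part $A\in\mathbb{P}$: either all $r\in A$ shorten $w$, or all $r\in A$ lengthen $w$. This uniform descent is exactly the ingredient needed to transplant the standard Coxeter-theoretic arguments from $(W,S)$ to the candidate pair $(C_W(\mathbb{P}), S_{\mathbb{P}})$.

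First I would verify the basic prerequisites. Since each part $A$ is spherical, $W_A$ is finite and its longest element $w_A$ is an involution, so $S_{\mathbb{P}}$ consists of involutions generating $C_W(\mathbb{P})$ by definition. Next I would translate the descent condition into a statement about $w_A$: if every $r\in A$ shortens $w$, then $w$ admits a reduced expression starting with $w_A$ (standard longest-element absorption in $W_A$), giving $\ell(w_A w)=\ell(w)-\ell(w_A)$; otherwise $\ell(w_A w)=\ell(w)+\ell(w_A)$. Define $\ell_{\mathbb{P}}(w)=\ell(w)/\ell(w_A)$-style counting; more precisely, let $\ell_{\mathbb{P}}(w)$ be the minimal number of factors in any expression $w=w_{A_1}\cdots w_{A_k}$ and observe that each left-multiplication by a generator $w_A\in S_{\mathbb{P}}$ changes $\ell_{\mathbb{P}}$ by exactly $\pm 1$.

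With this length function in hand, I would establish the exchange condition for $(C_W(\mathbb{P}), S_{\mathbb{P}})$: if $w=w_{A_1}\cdots w_{A_k}$ is reduced in $S_{\mathbb{P}}$ and $\ell_{\mathbb{P}}(w_A w)<\ell_{\mathbb{P}}(w)$, then some $w_{A_i}$ may be deleted. This follows by combining the $(W,S)$-exchange condition applied inside $W$ with the uniform descent property, which guarantees that reductions stay inside $C_W(\mathbb{P})$. By Matsumoto's characterization (a pair generated by involutions satisfying the exchange condition is a Coxeter system), one concludes that $(C_W(\mathbb{P}), S_{\mathbb{P}})$ is a Coxeter system. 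Finally, the defining relations are forced to be precisely the pairwise braid relations $(w_Aw_B)^{m_{AB}}=1$, where $m_{AB}$ is the order of $w_Aw_B$ in $W$; finiteness of $m_{AB}$ whenever the $(A,B)$-subdiagram is spherical comes for free, and equality ``no other relations'' follows from the standard fact that a Coxeter system is determined by the orders of products of pairs of generators.

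The main obstacle is the verification of the exchange condition, because one must show that the reduction takes place within $C_W(\mathbb{P})$ rather than merely within $W$. This is where the admissibility hypothesis is indispensable: the uniform descent condition guarantees that the subword one would delete in $W$ can be packaged into a factor $w_{A_i}\in S_{\mathbb{P}}$, so that the combinatorics of reduced words in $(C_W(\mathbb{P}), S_{\mathbb{P}})$ is faithfully reflected by reduced words in $(W,S)$. Once this compatibility is in place, the identification of the type $M_{\mathbb{P}}$ via the orders of the $w_Aw_B$ is essentially automatic.
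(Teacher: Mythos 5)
The paper gives no proof of this statement: it is imported verbatim as Theorem~1 of M\"uhlherr \cite{Muehl92} and used as a black box, so there is no internal argument to compare yours against. That said, your architecture is the standard (and essentially M\"uhlherr's) route: each $w_A$ is an involution because the parts of $\mathbb{P}$ are spherical, one verifies the exchange condition for the pair $(C_W(\mathbb{P}),S_{\mathbb{P}})$, invokes the Matsumoto--Tits characterization of Coxeter systems, and then the identification of the type as the matrix of orders of the products $w_Aw_B$ is automatic.

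The gap sits exactly where you locate it, and your text does not close it. Admissibility gives you, for $w\in C_W(\mathbb{P})$ and a part $A$, the dichotomy $\ell(w_Aw)=\ell(w)\pm\ell(w_A)$ (uniform descent plus longest-element absorption in $W_A$). But the exchange condition you need is stated in terms of the word length $\ell_{\mathbb{P}}$ with respect to $S_{\mathbb{P}}$, and you never establish the compatibility between the two length functions: neither the implication $\ell(w_Au)<\ell(u)\Rightarrow\ell_{\mathbb{P}}(w_Au)<\ell_{\mathbb{P}}(u)$ nor the additivity $\ell(w_{A_1}\cdots w_{A_k})=\sum_i\ell(w_{A_i})$ along $S_{\mathbb{P}}$-reduced expressions. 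Each of these is normally proved by an induction that presupposes the other, so as written the argument is circular precisely at its core. Similarly, ``combining the $(W,S)$-exchange condition with uniform descent'' does not by itself show that the single letter $s\in S$ deleted by the exchange condition in $W$ assembles into the deletion of an entire factor $w_{A_i}$; that packaging step --- an induction on $\ell$ that uses the descent dichotomy for \emph{all} parts of $\mathbb{P}$ simultaneously, not only for the part being multiplied --- is the actual content of the theorem. The skeleton is right and the claimed consequences (type given by the orders of $w_Aw_B$) do follow once the Coxeter-system property is in hand, but the one nontrivial step is asserted rather than proved.
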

\begin{rem}With the main theorems in \cite{L2014} and \cite{L20132},
if we write $\ddH_2=\ddI_2^5$ and have proved the conclusion in Table \ref{mainresults}, the
commutative diagram below follows.

\begin{center}
\phantom{longgggg}
\quad
\xymatrix{
\Br(\ddH_2)\ar[r]\ar[d] & \Br(\ddH_3) \ar[d]\ar[r] &\Br(\ddH_4)\ar[d]\\
          \Br(\ddA_4) \ar[r] & \Br(\ddD_6)\ar[r]&\Br(\ddE_8)
          }
\end{center}
Since the homomorphisms in the second row and the vertical homomorphisms are injective,
it follows that the homomorphisms in the first  rows are also injective.
Therefore the  Proposition \ref{prop:parabolic}  for  $\Br(\ddH_3)$ and  $\Br(\ddH_4)$ can be verified.
\end{rem}

\section{Main theorm on $\ddG_2$}
\label{sectionmaintheoremonG2}
We wrote about obtaining the Coxeter group of type $\ddG_2$ from the one of type $\ddD_4$ in Section \ref{defn:defnnonsimply}.
In the following sections, we  carry out the analogous operation on $\Br(\ddD_4)$ to obtain $\Br(\ddG_2)$ as a subalgebra of it. First recall the definition
of $\Br(\ddG_2)$ in Definition \ref{0.1}.
%
In Section \ref{defn:defnnonsimply}, we introduce that   $W(\ddG_2)$ can be obtained as a subgroup of
 $W(\ddD_4)$ as the fixed  subgroup of the isomorphism $\sigma=(1,2,4)$ acting on the
generators of $W(\ddD_4)$ indicated in Figure \ref{D4}.
\begin{figure}
\begin{center}
\includegraphics[width=.7\textwidth,height=.2\textheight]{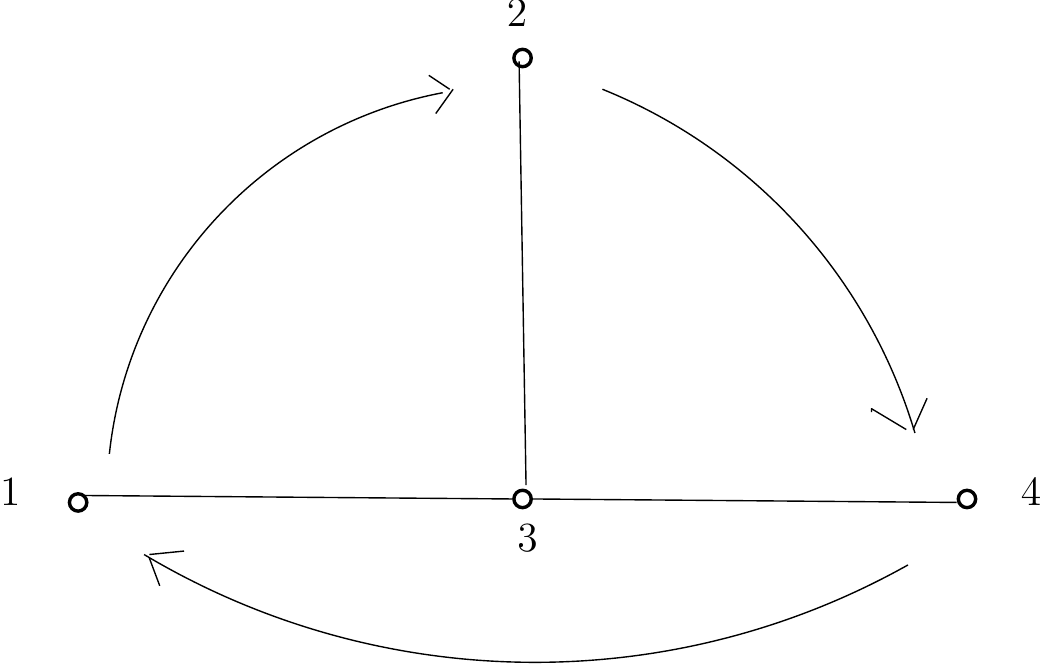}
\end{center}
\caption{Dynkin diagram of type $\ddD_4$}\label{D4}
\end{figure}

Here  the action of $\sigma$ can be extended to an isomorphism
 onto the Brauer algebra of type $\ddD_4$ by acting on the Temperley-Lieb generators $E_i$'s on their indices.
We denote  $\SBr(\ddD_4)$  the subalgebra of $\Br(\ddD_4)$ generated by  $\sigma$-invariant elements
in $\BrM(\ddD_4)$.
The main theorem on $\Br(\ddG_2)$ can be stated as  follows.
\begin{thm}\label{c7.main}
There is an  algebra isomorphism
$$\phi:\, \Br(\ddG_2)\longrightarrow \SBr(\ddD_4)$$
defined by $\phi(r_0)=R_1R_2R_4$, $\phi(r_1)=R_3$,
$\phi(e_0)=E_1E_2E_4$ and $\phi(e_1)=E_3$. Furthermore, both  $\Br(\ddG_2)$ and $\SBr(\ddD_4)$ are free over $\Z[\delta^{\pm 1}]$ of rank
$39$.
\end{thm}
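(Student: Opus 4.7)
The plan is to prove the theorem in four stages: well-definedness of $\phi$, construction of an explicit spanning set of size $39$ for $\Br(\ddG_2)$, identification of $\phi$-images as $\sigma$-invariant basis elements of $\Br(\ddD_4)$, and comparison with $\SBr(\ddD_4)$.

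\textbf{Stage 1: $\phi$ is a well-defined homomorphism.} The first task is to check that $R_1R_2R_4$, $R_3$, $E_1E_2E_4$, $E_3 \in \Br(\ddD_4)$ satisfy the defining relations of $\Br(\ddG_2)$ from Definition \ref{0.1}. The involution relations and the mixed relations (\ref{c7.0.1.3})--(\ref{c7.0.1.4}) follow immediately from the corresponding relations in $\Br(\ddD_4)$ and the fact that $R_1,R_2,R_4$ pairwise commute with each other. The loop-parameter relations for $e_0^2$ and $e_1^2$ require $\kappa_0 = 3$ and $\kappa_1 = 1$, which match: $(E_1E_2E_4)^2 = E_1^2E_2^2E_4^2 = \delta^3 E_1E_2E_4$ and $E_3^2 = \delta E_3$. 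The serious checks are the triple-bond relations (\ref{c7.0.1.7})--(\ref{c7.0.1.17}). For these I would use the auxiliary identities (\ref{3.1.1})--(\ref{3.1.7}) of Remark \ref{morerel} repeatedly, together with the fact that $R_3$ is adjacent to each of $R_1, R_2, R_4$ individually in $\ddD_4$ while those three are pairwise non-adjacent. For instance, $(\ref{c7.0.1.14})$ becomes $E_3(R_1R_2R_4)E_3 = \delta^2 E_3$: three applications of $E_iR_jE_i = E_i$ (equation (\ref{3.1.3})) on the commuting factors give exactly $\delta^2 E_3$. The order-six relation $(\ref{c7.0.1.17})$ reduces to $(R_3R_1R_2R_4)^6 = 1$, which holds in $W(\ddD_4)$ since $R_3R_1R_2R_4$ is precisely the image of the Coxeter element of $W(\ddG_2)$ under Tits' construction.

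\textbf{Stage 2: Spanning set of size $39$.} This is the crucial step. The Weyl group $W(\ddG_2)$ is the dihedral group of order $12$, giving $12$ basis elements of height $0$. I would then enumerate the admissible $W(\ddG_2)$-orbits of mutually orthogonal positive short/long root sets in $\Psi^+$, and for each such orbit $\mathfrak B$ write down a candidate spanning family $\{u\, \hat e_B\, z\, v^{\op}\}$ analogous to Proposition \ref{ADErewform} and Theorem \ref{thm:genralwriting}, where $B$ is the maximal element of $\mathfrak B$, $u,v$ run over coset representatives of the normalizer, and $z$ runs over the centralizer group. The count should split as $12 + (\text{orbits through }e_1) + (\text{orbits through }e_0) + (\text{mixed})$ summing to $39$. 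To show this family spans, I would prove by induction on word length that any monomial in $r_0,r_1,e_0,e_1$ reduces to this normal form by applying the relations of Definition \ref{0.1} and the derived identities analogous to Lemma \ref{furtherrel} for the triple bond (which will be established in Lemma \ref{c7.2}). This is the main obstacle: the triple-bond relations interact with products containing both $e_0$ and $e_1$ in ways without precedent in the $\ddB, \ddC, \ddF$ cases, so every possible reordering of $r_i, e_j$ must be analyzed.

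\textbf{Stage 3: Linear independence via $\phi$.} Once the $39$-element spanning set $\mathcal S \subset \Br(\ddG_2)$ is in hand, I would compute $\phi(\mathcal S) \subset \BrM(\ddD_4)$. Because $\BrM(\ddD_4)$ is a basis of $\Br(\ddD_4)$ as a $\Z[\delta^{\pm 1}]$-module by the work recalled in Section \ref{sect:defnsimlacety}, it suffices to verify that the $39$ images are pairwise distinct monomials in $\BrM(\ddD_4)$. Each image can be put into the canonical form $\delta^i a_B \hat E_Y h a_{B'}^{\op}$ of Theorem \ref{thm:genralwriting}, and distinctness follows because different normal forms in $\Br(\ddG_2)$ correspond to different $\sigma$-symmetric admissible sets, centralizer elements, or coset representatives in $\Br(\ddD_4)$.

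\textbf{Stage 4: Identification with $\SBr(\ddD_4)$.} Each image $\phi(x)$ is manifestly $\sigma$-invariant since $\sigma$ fixes $\{1,2,4\}$ setwise and fixes $3$, so $\phi(\Br(\ddG_2)) \subseteq \SBr(\ddD_4)$. For the reverse inclusion, I would invoke the normal form Theorem \ref{thm:genralwriting} for $\Br(\ddD_4)$: a $\sigma$-invariant monomial must have $B = \emptyset\cdot a$ and $B' = a\cdot\emptyset$ both $\sigma$-invariant, and its coset representatives and centralizer contribution can each be chosen $\sigma$-symmetrically, hence expressible via $R_3, R_1R_2R_4, E_3, E_1E_2E_4$. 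Counting $\sigma$-fixed canonical forms should again yield $39$, confirming $\phi$ is a bijection onto $\SBr(\ddD_4)$ and simultaneously that both algebras are free of rank $39$ over $\Z[\delta^{\pm 1}]$.
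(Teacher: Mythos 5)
Your outline tracks the paper's own proof almost step for step: the paper (i) verifies the defining relations of $\Br(\ddG_2)$ hold for the images of the generators using the derived relations (\ref{3.1.1})--(\ref{3.1.7}), (ii) builds the $39$-element spanning set via normal forms $\delta^k u e_i v w$ (Lemmas \ref{c7.2}, \ref{N_i}, \ref{c7.anyr}, \ref{c7.anye}, \ref{c7.rewrittenforms}, with the count $12+3^2\cdot 1+3^2\cdot 2=39$), (iii) gets injectivity by matching these against the sandwich forms of Theorem \ref{thm:genralwriting} for $Y=\{1,2,4\}$ and $Y=\{3\}$, and (iv) gets surjectivity onto $\SBr(\ddD_4)$ by observing that the orbits of $\{\alpha_1,\alpha_2\}$ and $\{\alpha_1,\alpha_4\}$ contain no $\sigma$-invariant members, so only $\emptyset$, $\{3\}$, $\{1,2,4\}$ occur. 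So the strategy is the same; your ``mixed'' orbits in Stage 2 turn out to be empty, since every product involving both $e_0$ and $e_1$ collapses to one of the two forms (this is the content of Lemma \ref{c7.anye}).

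One concrete misstep in Stage 1 needs correcting. Relation (\ref{c7.0.1.14}) reads $e_i r_j e_i=\delta^2 e_i$ with the $\ddG_2$ convention $i=0$, $j=1$, so under $\phi$ it becomes $E_1E_2E_4\,R_3\,E_1E_2E_4=\delta^2E_1E_2E_4$, which follows from a single application of $E_4R_3E_4=E_4$ (relation (\ref{3.1.3})) followed by $E_1^2E_2^2=\delta^2E_1E_2$. The identity you assert instead, $E_3(R_1R_2R_4)E_3=\delta^2E_3$, is false in $\Br(\ddD_4)$: one computes $R_1R_2R_4E_3R_1R_2R_4=E_{\alpha_1+\alpha_2+\alpha_3+\alpha_4}$, so $E_3R_1R_2R_4E_3=E_3E_{\alpha_1+\alpha_2+\alpha_3+\alpha_4}R_1R_2R_4$, and indeed the correct $\ddG_2$ relation satisfied by $e_1$ and $r_0$ is the quintic one (\ref{c7.0.1.12}), $e_1r_0e_1r_0e_1=e_1$ (which would contradict $e_1r_0e_1=\delta^2e_1$ generically). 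Moreover ``three applications of $E_iR_jE_i=E_i$'' is not a legal manipulation here, since the three reflections sit between only two copies of $E_3$; even if it were, it would produce $E_3$, not $\delta^2E_3$. Once the roles of the two nodes are kept straight, the rest of your Stage 1 (and the remaining stages) goes through as in the paper.
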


\section{Root systems for $\ddD_4$ and $\ddG_2$}
Let $\{\alpha_i\}_{i=1}^{4}$ be simple roots of
$\ddD_4$. These can be realized in $\R^4$ with $\alpha_{1}=\eps_1+\eps_2$,
$\alpha_i=\eps_{i}-\eps_{i-1}$ for $2\leq i\leq 4$, where $\{\eps_i\}_{i=1}^{4}$ is an
orthonormal basis of $\R^4$. The positive root system will be taken to the vectors
$\eps_j\pm \eps_i$ with $1\leq i< j \leq 4$, denoted by $\Phi^+$, the whole root system
 is denoted by $\Phi$. We can define a  linear transformation by the following matrix, also called $\sigma$,
by considering each element in $\R^4$ as a column vector.
\begin{eqnarray*}
\left(
  \begin{array}{cccc}
    -1/2 & -1/2 & -1/2 & 1/2 \\
    1/2 & 1/2 & -1/2& 1/2 \\
    1/2 & -1/2 & 1/2 & 1/2 \\
    -1/2 & 1/2 & 1/2 & 1/2 \\
  \end{array}
\right)
\end{eqnarray*}
It can be checked that $\sigma(\alpha_1)=\alpha_2$,
$\sigma(\alpha_2)=\alpha_4$, $\sigma(\alpha_3)=\alpha_3$,
$\sigma(\alpha_4)=\alpha_1$ and $\sigma$ has order $3$ in ${\rm SL}(\R^4)$.
\\The Reynold's map is a surjective  linear map from $\R^4$ to the subspace of
 $\sigma$-invariant  vectors, defined as follows,
$$\R^4\longrightarrow \R_{\sigma}^4$$
$$\fp\,  : \,x \mapsto \frac{x+\sigma(x)+\sigma^2(x)}{3}.$$
In particular, $$\beta_0=\fp(\alpha_1)=\frac{\alpha_1+\alpha_2+\alpha_4}{3},\quad \,\beta_1=\fp(\alpha_3)=\alpha_3$$
is a basis of $\R_{\sigma}^4$ whose elements have square norm $2/3$ and $2$, respectively; these two vectors  can be taken as
simple roots of $\ddG_2$. The  root system of type $\ddG_2$ is denoted by
$\Psi$, and the positive root system  in which  $\beta_0$ and $\beta_1$ are simple roots is denoted by $\Psi^+$.

Recall the notion of  admissible set and $\cB$ from Definition \ref{df:admissible}.
There is a natural action of the Coxeter group $W(\ddG_2)$ ($W(\ddD_4)$, respectively)
acting on $\cB$ ($\cA$, respectively), by negating roots in
$\Psi\setminus \Psi^+$ ($\Phi\setminus \Phi^+$, respectively).
The following can be verified  by calculation.

\begin{prop} \label{c7.rem:ad}
The set  $\cB$ has  two $W(\ddG_2)$-orbits, which are
orbits of $\{\beta_1\}$ and $\{\beta_0, 3\beta_0 +2\beta_1\}$.
\end{prop}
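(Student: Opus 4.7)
The plan is to enumerate all $\sigma$-invariant admissible sets in $\cA$, apply $\fp$ to obtain the elements of $\cB$, and then verify the $W(\ddG_2)$-orbit structure by direct computation. First I would use the explicit matrix of $\sigma$ to compute its orbits on $\Phi^+$. The fixed positive roots turn out to be exactly $\alpha_3$, $\eps_2+\eps_4$, and $\eps_3+\eps_4$, and the remaining nine positive roots of $\ddD_4$ split into three $\sigma$-orbits of size three, namely $\{\alpha_1,\alpha_2,\alpha_4\}$, $\{\alpha_1+\alpha_3,\alpha_2+\alpha_3,\alpha_3+\alpha_4\}$, and $\{\alpha_1+\alpha_3+\alpha_4,\alpha_2+\alpha_3+\alpha_4,\alpha_1+\alpha_2+\alpha_3\}$. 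A short inner-product check shows that each of these size-three orbits consists of mutually orthogonal roots.

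Next I would combine this with Table \ref{table:admADE}, which lists the admissible $W(\ddD_4)$-orbits as $\emptyset$, the singleton orbit, two pair orbits ($\{\alpha_1,\alpha_2\}$ and $\{\alpha_1,\alpha_4\}$), and the unique four-element orbit with representative $\{\alpha_1,\alpha_2,\alpha_4,\alpha_1+\alpha_2+\alpha_4+2\alpha_3\}$. Since any $\sigma$-invariant mutually orthogonal root set is a disjoint union of $\sigma$-orbits, and a direct check confirms that no two of the three fixed positive roots $\alpha_3$, $\eps_2+\eps_4$, $\eps_3+\eps_4$ are orthogonal, the $\sigma$-invariant admissible sets are exhausted by: $\emptyset$; the three fixed singletons; and three admissible four-element sets, each obtained by joining one of the three size-three $\sigma$-orbits with the unique $\sigma$-fixed positive root orthogonal to all of its elements (respectively $\eps_3+\eps_4$, $\eps_2+\eps_4$, and $\alpha_3$). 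In particular the pair-orbits contribute no $\sigma$-invariant representatives.

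Applying $\fp$ collapses each size-three $\sigma$-orbit to a single short root of $\Psi$ and fixes the long roots. The three fixed singletons map to $\{\beta_1\}$, $\{3\beta_0+\beta_1\}$, $\{3\beta_0+2\beta_1\}$, which are exactly the singletons of the three long positive roots of $\ddG_2$; the three four-element sets map to the three orthogonal short-long pairs $\{\beta_0,\,3\beta_0+2\beta_1\}$, $\{\beta_0+\beta_1,\,3\beta_0+\beta_1\}$, and $\{2\beta_0+\beta_1,\,\beta_1\}$. Since $W(\ddG_2)$ is the dihedral group of order $12$ and its rotation by $60^{\circ}$ cyclically permutes both the three long positive roots and the three short-long orthogonal pairs, these six nonempty elements of $\cB$ fall into exactly the two $W(\ddG_2)$-orbits asserted in the proposition.

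The main obstacle is bookkeeping rather than any conceptual difficulty: one must carefully verify mutual orthogonality across the twelve positive roots of $\ddD_4$ and correctly track the $\sigma$-orbit structure. The key nontrivial observation that makes the argument clean is that each of the three size-three $\sigma$-orbits is already a mutually orthogonal triple, so its admissible closure is just a four-element set produced by adjoining the unique orthogonal $\sigma$-fixed root, and no further closure operations are needed.
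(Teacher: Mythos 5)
Your computation is correct and is exactly the calculation the paper leaves implicit: Proposition \ref{c7.rem:ad} is given there with only the remark that it ``can be verified by calculation,'' and your enumeration of the $\sigma$-orbits on $\Phi^+$, the resulting $\sigma$-invariant admissible sets (three fixed singletons and the three maximal orthogonal $4$-sets, with the pair-orbits contributing nothing), and their images under $\fp$ supplies precisely that verification. The only cosmetic point is that $\emptyset$ also lies in $\cB$ as a trivial fixed element, a degenerate orbit the paper likewise ignores.
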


\section{The map $\phi$ inducing a homomorphism}

In order to avoid confusion
with the above generators, the symbols of  the generators of $\Br(\ddD_4)$ have been capitalized.

\begin{rem} In Section \ref{sect:defnsimlacety}, we have seen that for each $\gamma\in \Phi$ and  each orthogonal root set $X\subset\Phi^+$, the monomials
 $E_{\gamma}$ and
$E_{X}$ are well defined. Let $R_\gamma$ denote the reflection corresponding to $\gamma$. By conjugation of elements of $W(\ddD_4)$,
 the symbol "$\sim$" can have a more general meaning of  two roots  being  not orthogonal or equal.
\end{rem}
We prove that $\phi$ induces a homomorphism.
\begin{proof}It suffices to  verify that the relations involved in Definition
\ref{c7.0.1} for type $\ddG_2$  still hold when the generators are  substituted by their
images under $\phi$.

As for (\ref{c7.0.1.7}), (\ref{c7.0.1.12})--(\ref{c7.0.1.14}), they follow from
 \begin{eqnarray*}
\phi(r_0)\phi(e_1)\phi(e_0)&=&R_1R_2(R_4E_3E_4)E_1E_2\\
&\overset{(\ref{3.1.2})}{=}&R_1(R_2R_3(E_4E_2))E_1\overset{(\ref{1.1.7})+(\ref{3.1.2})}{=}R_1E_3E_4E_2E_1\\
&\overset{(\ref{1.1.7})+(\ref{3.1.2})}{=}&R_3E_4E_2E_1=\phi(r_1)\phi(e_0),
\end{eqnarray*}
\begin{eqnarray*}
\phi(e_1)\phi(r_0)\phi(e_1)\phi(r_0)\phi(e_1)=E_3E_{\Sigma_{i=1}^{4}\alpha_i}E_3\overset{(\ref{3.1.5})}{=}E_3=\phi(e_1),
\end{eqnarray*}
\begin{eqnarray*}\phi(e_1)\phi(r_0)\phi(e_1)\phi(r_0)\phi(r_1)&=&E_3E_{\Sigma_{i=1}^{4}\alpha_i}R_3\\
&\overset{(\ref{3.1.3})}{=}&E_3R_{\Sigma_{i=1}^{4}\alpha_i}=\phi(e_1)\phi(r_0)\phi(r_1)\phi(r_0),
\end{eqnarray*}
\begin{eqnarray*}
\phi(e_0)\phi(r_1)\phi(e_0)&=&E_1E_2(E_4R_3E_4)E_1E_2\\
&\overset{(\ref{3.1.3})}{=}&E_1E_2E_4E_2E_1\\
&\overset{(\ref{1.1.7})+(\ref{1.1.4})}{=}&\delta^2 E_1E_2E_4=\phi(\delta^2 e_0).
\end{eqnarray*}
Relations (\ref{c7.0.1.8}) and (\ref{c7.0.1.16}) under $\phi$ acting on generators hold by the above verification for
(\ref{c7.0.1.7}), (\ref{c7.0.1.13}) and the natural opposition involution
on $\Br(\ddD_4)$.\\
The remaining relations can be proved similarly.
\end{proof}
\section{Normal forms of $\BrM(\ddG_2)$}\label{sectnormformG2}
The following lemma can be shown to hold  by an argument similar to the proof of
\cite[Lemma 4.1]{L2014}.
\begin{lemma} The submonoid of  $\BrM(\ddG_2)$  generated by $r_0$ and $r_1$ is
isomorphic to $W(\ddG_2)$.
\end{lemma}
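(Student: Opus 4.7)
The plan is to check that the relations satisfied by $r_0$ and $r_1$ alone in $\Br(\ddG_2)$ are precisely the defining Coxeter relations of $W(\ddG_2)$, and then rule out any extra relations by transporting the submonoid into $\Br(\ddD_4)$ via $\phi$.

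First I would isolate the relations in Definition \ref{c7.0.1} that involve only $r_0$ and $r_1$. These are $r_0^2 = 1$, $r_1^2 = 1$ (from (\ref{c7.0.1.3})) and $(r_1r_0)^6 = 1$ (from (\ref{c7.0.1.17})). Since $m_{01} = 6$ in the Dynkin diagram of $\ddG_2$, these are exactly the defining Coxeter relations of $W(\ddG_2)$. Consequently, the universal property of $W(\ddG_2)$ yields a surjective monoid homomorphism
\[
\psi : W(\ddG_2) \longrightarrow \langle r_0, r_1 \rangle \subseteq \BrM(\ddG_2).
\]

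The remaining task is injectivity of $\psi$. For this I would compose with the algebra homomorphism $\phi : \Br(\ddG_2) \to \Br(\ddD_4)$ (whose well-definedness was established just before this lemma). Under $\phi \circ \psi$, the Coxeter generators of $W(\ddG_2)$ are sent to $R_1 R_2 R_4$ and $R_3$ inside $\BrM(\ddD_4)$. By the general theory of Brauer algebras of simply laced type (see, e.g., the rewriting form in Proposition \ref{ADErewform} or Theorem \ref{thm:genralwriting}), the natural map $W(\ddD_4) \to \BrM(\ddD_4)$ is injective, so $W(\ddD_4)$ sits inside $\BrM(\ddD_4)$ as a subgroup.

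It then remains to recall, following Tits \cite{T1959} and M\"uhlherr \cite{Muehl92}, that the fixed subgroup of $W(\ddD_4)$ under the diagram automorphism $\sigma = (1,2,4)$ is generated by $R_3$ and $R_1 R_2 R_4$ and is isomorphic to $W(\ddG_2)$. Hence the composition $\phi \circ \psi$ identifies $W(\ddG_2)$ with this fixed subgroup of $W(\ddD_4) \subseteq \BrM(\ddD_4)$, which forces $\phi \circ \psi$ — and therefore $\psi$ — to be injective. The main potential obstacle is simply to make sure one cites the correct injectivity statement for the Coxeter group inside the simply laced Brauer monoid; once that is in hand, the argument is immediate because everything else (the Coxeter relations and the identification of the $\sigma$-fixed subgroup with $W(\ddG_2)$) is classical.
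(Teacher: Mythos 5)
Your argument is correct and is essentially the route the paper intends: the paper itself gives no details, deferring to the analogous \cite[Lemma 4.1]{L2014}, and the standard argument there is exactly yours --- the relations (\ref{c7.0.1.3}) and (\ref{c7.0.1.17}) give a surjection from $W(\ddG_2)$ onto the submonoid (which is a group since $r_0,r_1$ are involutions), and injectivity follows by composing with $\phi$, using that $W(\ddD_4)$ embeds in $\BrM(\ddD_4)$ by the normal-form results and that the $\sigma$-fixed subgroup generated by $R_1R_2R_4$ and $R_3$ is a Coxeter system of type $\ddG_2$ by the Tits--M\"uhlherr theorem quoted in Section \ref{defn:defnnonsimply}.
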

\begin{lemma}\label{c7.2}
The following equalities hold in $\Br(\ddG_2)$.
\begin{eqnarray}
r_0r_1e_0&=&e_1e_0,     \label{c7.0.1.9}
\\
 e_0r_1r_0&=&e_0e_1,   \label{c7.0.1.10}
\\
 e_0e_1e_0&=&\delta^2 e_0,                                       \label{c7.0.1.11}
 \\
r_1r_0e_1r_0r_1e_0&=&\delta e_0.         \label{c7.0.1.15}
\end{eqnarray}
\end{lemma}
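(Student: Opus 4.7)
The plan is to derive each of the four stated identities directly from the defining relations in Definition~\ref{0.1}, specializing the triple-bond relations to the $\ddG_2$ case (where the relations (\ref{c7.0.1.7})--(\ref{c7.0.1.17}) apply with $i=0$, $j=1$). Each identity should reduce to a short chain of manipulations, so no auxiliary machinery beyond the presentation itself is needed.

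For (\ref{c7.0.1.9}), I would start from relation (\ref{c7.0.1.7}), which reads $r_0e_1e_0=r_1e_0$, and simply left-multiply both sides by $r_0$; using $r_0^2=1$ from (\ref{c7.0.1.3}) collapses the left-hand side to $e_1e_0$, giving $e_1e_0=r_0r_1e_0$. The identity (\ref{c7.0.1.10}) follows by the analogous (op-dual) step: starting from (\ref{c7.0.1.8}), namely $e_0e_1r_0=e_0r_1$, I right-multiply by $r_0$ and again use $r_0^2=1$.

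For (\ref{c7.0.1.11}), the strategy is to feed the identity (\ref{c7.0.1.10}) just obtained into the left factor:
\begin{eqnarray*}
e_0e_1e_0 \;=\; (e_0r_1r_0)\,e_0 \;=\; e_0r_1(r_0e_0) \;=\; e_0r_1e_0 \;=\; \delta^{2}e_0,
\end{eqnarray*}
where the third equality uses $r_0e_0=e_0$ from (\ref{c7.0.1.4}) and the last is (\ref{c7.0.1.14}). Finally, for (\ref{c7.0.1.15}), I would use (\ref{c7.0.1.9}) twice, sandwiched around the quasi-idempotent relation $e_1^2=\delta e_1$ from (\ref{c7.0.1.5}): first substitute $r_0r_1e_0=e_1e_0$ into the right portion of $r_1r_0e_1r_0r_1e_0$ to get $r_1r_0e_1\cdot e_1e_0$; collapse $e_1^2$ to $\delta e_1$; then apply (\ref{c7.0.1.9}) once more to rewrite $e_1e_0$ as $r_0r_1e_0$, so that $r_1r_0\cdot r_0r_1e_0$ telescopes via $r_0^2=r_1^2=1$ to $e_0$, yielding $\delta e_0$ overall.

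There is no substantive obstacle to this proof: the whole lemma is a bookkeeping exercise in the defining relations. The only point meriting care is the order in which the identities are proved, since (\ref{c7.0.1.11}) invokes (\ref{c7.0.1.10}) and (\ref{c7.0.1.15}) invokes (\ref{c7.0.1.9}) (twice), so (\ref{c7.0.1.9}) and (\ref{c7.0.1.10}) must be established first. This sequencing also mirrors the strategy of Lemma~\ref{furtherrel} for the double-bond case and the op-duality observed in the proof of Proposition~\ref{c7.prop:opp}.
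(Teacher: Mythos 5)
Your proposal is correct and follows essentially the same route as the paper: (\ref{c7.0.1.9}) and (\ref{c7.0.1.10}) by multiplying (\ref{c7.0.1.7}) and (\ref{c7.0.1.8}) by $r_0$, then (\ref{c7.0.1.11}) reduced to $e_0r_1e_0=\delta^2e_0$ (the paper substitutes via (\ref{c7.0.1.9}) on the right factor rather than (\ref{c7.0.1.10}) on the left, an immaterial difference), and (\ref{c7.0.1.15}) via $e_1^2=\delta e_1$ and one more application of (\ref{c7.0.1.7})/(\ref{c7.0.1.9}). The dependency ordering you flag is exactly the one the paper uses.
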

\begin{proof} These equalities are derived as follows.
\begin{eqnarray*}
r_0(r_1e_0)&\overset{(\ref{c7.0.1.7})}{=}&(r_0r_0)e_1e_0
\overset{(\ref{c7.0.1.3})}{=}e_1e_0,\\
e_0r_1r_0&\overset{(\ref{c7.0.1.8})}{=}&e_0e_1r_0r_0
\overset{(\ref{c7.0.1.3})}{=}e_0e_1,\\
e_0e_1e_0&\overset{(\ref{c7.0.1.9})}{=}&e_0r_0r_1e_0
\overset{(\ref{c7.0.1.4})}{=}e_0r_1e_0
\overset{(\ref{c7.0.1.14})}{=}\delta^2 e_0,\\
r_1r_0e_1(r_0r_1e_0)&\overset{(\ref{c7.0.1.9})}{=}&r_1r_0e_1 e_1e_0
\overset{(\ref{c7.0.1.6})}{=}\delta r_1r_0e_1e_0
\overset{(\ref{c7.0.1.7})}{=}\delta r_1r_1e_0
\overset{(\ref{c7.0.1.3})}{=}\delta e_0.
\end{eqnarray*}
\end{proof}
\begin{rem}Consider (\ref{c7.0.1.15}).
The image of the left side under $\phi$ is $E_{X^{\rm cl}}$, and of
the right side under $\phi$ is $\delta E_X$,
where $$X=\{\alpha_1, \alpha_2, \alpha_4\}, \quad X^{\rm cl}=\{\alpha_1, \alpha_2, \alpha_4, 2\alpha_3+\alpha_1+\alpha_2+\alpha_4\}.$$
It is known  from  Section \ref{sect:defnsimlacety} that $E_{X^{\rm cl}}=\delta E_{X}$,
this also serves for (\ref{c7.0.1.15}) holding under $\phi$.
\end{rem}

%
For the group $W(\ddG_2)$ acting on $\Psi^+$, we have the following conclusion.
\begin{lemma}\label{N_i}
 Let $N_0$, $N_1$ be  stabilizers of $\beta_0$ and $\beta_1$ respectively.
Then for any element $a\in N_i$, we have that $ae_ia^{-1}=e_i$, for $i=0$, $1$.
\end{lemma}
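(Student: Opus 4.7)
The plan is to identify explicit generators of $N_0$ and $N_1$ inside $W(\ddG_2)$ and then verify that each generator commutes with the corresponding $e_i$, from which $ae_ia^{-1}=e_i$ follows at once for every $a\in N_i$. Since $W(\ddG_2)$ is dihedral of order $12$ acting on the six positive roots, the setwise stabilizer of $\{\beta_i\}$ is the Klein four-group generated by $r_i$ together with the reflection $r_i^{\perp}$ in the unique positive root orthogonal to $\beta_i$. A direct Weyl-group computation pins these down: the root orthogonal to $\beta_0$ is the highest long root $3\beta_0+2\beta_1=r_1r_0\beta_1$, so its reflection is
\[
r_{-2}:=r_1r_0r_1r_0r_1,
\]
while the root orthogonal to $\beta_1$ is the highest short root $2\beta_0+\beta_1=r_0r_1\beta_0$, with reflection
\[
r_{-1}:=r_0r_1r_0r_1r_0.
\]
Thus $N_0=\langle r_0,r_{-2}\rangle$ and $N_1=\langle r_1,r_{-1}\rangle$.

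That $r_i$ commutes with $e_i$ is relation (\ref{c7.0.1.4}). For the pair $(r_{-2},e_0)$ I plan to prove the stronger identity $r_{-2}e_0=e_0$. Starting from $r_1r_0r_1r_0r_1\cdot e_0$, I would use (\ref{c7.0.1.9}) in the form $r_1e_0=r_0e_1e_0$, cancel the resulting $r_0^2=1$, apply $r_1e_1=e_1$, and finish with (\ref{c7.0.1.7}) ($r_0e_1e_0=r_1e_0$) together with $r_1^2=1$. Because the word $r_{-2}$ is a palindrome, the anti-involution $\op$ of Proposition \ref{c7.prop:opp} transforms this into $e_0r_{-2}=e_0$, and the commutation $r_{-2}e_0r_{-2}=e_0$ follows.

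For the pair $(r_{-1},e_1)$ one should not expect $r_{-1}e_1=e_1$; only the commutation $r_{-1}e_1=e_1r_{-1}$ will hold. The key is relation (\ref{c7.0.1.16}): $r_1r_0e_1r_0e_1=r_0r_1r_0e_1$. Read this as $r_0r_1r_0e_1=(r_1r_0e_1)(r_0e_1)$ and substitute into $r_{-1}e_1=r_0r_1\cdot r_0r_1r_0e_1$; the successive $r_i^2=1$ cancellations collapse the word to $e_1r_0e_1$. Separately, (\ref{c7.0.1.13}) ($e_1r_0e_1r_0r_1=e_1r_0r_1r_0$), multiplied on the right by $r_1r_0$, yields $e_1r_0e_1=e_1\cdot r_0r_1r_0r_1r_0=e_1r_{-1}$. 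Combining the two expressions gives $r_{-1}e_1=e_1r_{-1}$, whence $r_{-1}e_1r_{-1}=e_1$.

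Once both generators of each $N_i$ are shown to commute with $e_i$, so does every $a\in N_i$, and $ae_ia^{-1}=e_i$ is immediate. The main obstacle I anticipate is the choice of intermediate relation at each step: the triple bond generates many identities of similar shape among (\ref{c7.0.1.7})--(\ref{c7.0.1.17}) and those of Lemma \ref{c7.2}, and the two cases behave asymmetrically -- for $e_0$ one can cascade the simplifications all the way down to the identity, but for $e_1$ the best available outcome is commutation with $r_{-1}$, and (\ref{c7.0.1.16}) must be recognized as precisely the identity that makes the length-$5$ word telescope.
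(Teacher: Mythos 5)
Your proposal is correct and follows essentially the same route as the paper: identify the stabilizers $N_0=\langle r_0,\,r_1r_0r_1r_0r_1\rangle$ and $N_1=\langle r_1,\,r_0r_1r_0r_1r_0\rangle$ and verify the conjugation relation generator by generator using (\ref{c7.0.1.7}), (\ref{c7.0.1.13}), (\ref{c7.0.1.16}) and Lemma \ref{c7.2}. Your intermediate one‑sided identities $r_1r_0r_1r_0r_1e_0=e_0$ and $r_0r_1r_0r_1r_0e_1=e_1r_0r_1r_0r_1r_0$ are exactly the ones the paper records immediately afterwards in the proof of Lemma \ref{c7.anyr}, so the computations match up to trivial rearrangement.
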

\begin{proof}
It can be checked that $N_0\cong (\Z/2\Z)^2$, with generators $r_0$ and $r_1r_0r_1r_0r_1$.
Also you can get that $N_1\cong (\Z/2\Z)^2$, with generators $r_1$ and $r_0r_1r_0r_1r_0$.
Hence the Lemma hold for the below,
\begin{eqnarray*}r_0e_0r_0&\overset{(\ref{c7.0.1.4})}{=}& e_0,\\
r_1e_1r_1&\overset{(\ref{c7.0.1.4})}{=}& e_1,\\
r_1r_0r_1(r_0r_1e_0r_1r_0)r_1r_0r_1&\overset{(\ref{c7.0.1.9})+(\ref{c7.0.1.10})}{=}&r_1r_0(r_1e_1e_0e_1r_1)r_0r_1\\
&\overset{(\ref{c7.0.1.4})}{=}&r_1(r_0e_1e_0e_1r_0)r_1\\
&\overset{(\ref{c7.0.1.7})+(\ref{0.1.8})}{=}&r_1r_1e_0r_1r_1\\
&\overset{(\ref{c7.0.1.3})}{=}&e_0,\\
r_0r_1(r_0r_1r_0e_1r_0r_1r_0)r_1r_0&\overset{(\ref{c7.0.1.16})+(\ref{c7.0.1.13})}{=}&(r_0r_1r_1r_0)e_1r_0e_1r_0e_1(r_0r_1r_1r_0)\\
&\overset{(\ref{c7.0.1.3})}{=}&e_1r_0e_1r_0e_1\\
&\overset{(\ref{c7.0.1.12})}{=}&e_1.
\end{eqnarray*}
\end{proof}

Recall  the defining formula (\ref{genebeta}). Consider a positive root $\beta$ and a node $i$ of type $\ddG_2$. If there
exists $w\in W$ such that $w\beta_i=\beta$, then we can define the element
$e_{\beta}$ in $\BrM(\ddG_2)$ by
$$e_{\beta}=we_iw^{-1}.$$
The above lemma implies that $e_\beta$ is well defined.

\begin{lemma}\label{c7.anyr}
Let $D_i$ be a set of left coset representatives for $N_i$ in $W(\ddG_2)$ for $i=0$, $1$,
and $K_0=\langle 1\rangle\subset N_0$, $K_1=\langle r_0r_1r_0r_1r_0\rangle\subset N_1$.
Then for any $r\in W(\ddG_2)$, there exist $a\in D_i$ and  $b\in K_i$
such that
 $$re_i=ae_ib.$$
\end{lemma}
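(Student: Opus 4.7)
The plan is to use the left coset decomposition $W(\ddG_2) = \bigsqcup_{a\in D_i} aN_i$, commute the $N_i$-part across $e_i$ by invoking Lemma \ref{N_i}, and then verify by a small finite case check that, modulo $K_i$, the stabilizer $N_i$ acts trivially on $e_i$ from the right. Because $W(\ddG_2)$ has order $12$ and each $N_i$ is a Klein four group with explicit generators (as recorded in the proof of Lemma \ref{N_i}), the verification reduces to four cases for each of $i=0,1$.

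First I would write $r = ah$ with $a\in D_i$ and $h\in N_i$. Lemma \ref{N_i} gives $he_ih^{-1}=e_i$, equivalently $he_i = e_ih$, so $re_i = ahe_i = ae_ih$, and the problem reduces to replacing the $h$ appearing on the right of $e_i$ by some $b\in K_i$ with $e_ih = e_ib$.

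For $i=0$, the generators of $N_0$ are $r_0$ and $r_1r_0r_1r_0r_1$, and $K_0 = \{1\}$, so I need $e_0h = e_0$ for every $h\in N_0$. The identity $e_0 r_0 = e_0$ is immediate from (\ref{c7.0.1.4}); the key calculation is $e_0\cdot r_1r_0r_1r_0r_1 = e_0$, which I would obtain by applying (\ref{c7.0.1.10}) to collapse $e_0 r_1 r_0$ to $e_0 e_1$, then (\ref{c7.0.1.4}) to absorb the next $r_1$ into $e_1$, then (\ref{c7.0.1.10}) again (used as $e_0e_1 = e_0r_1r_0$) followed by (\ref{c7.0.1.3}). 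For $i=1$, the generators of $N_1$ are $r_1$ and $r_0r_1r_0r_1r_0$, and using $(r_0r_1)^6=1$ one checks these commute, so $N_1 = \{1,\, r_1,\, r_0r_1r_0r_1r_0,\, w_0\}$ with $w_0=(r_0r_1)^3$ the longest element. Since $e_1r_1=e_1$ by (\ref{c7.0.1.4}), the four cases collapse: for $h\in\{1,r_1\}$ take $b=1$, and for $h\in\{r_0r_1r_0r_1r_0, w_0\}$ take $b = r_0r_1r_0r_1r_0 \in K_1$.

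The only step requiring any real computation is the single rewrite $e_0\cdot r_1r_0r_1r_0r_1 = e_0$ in the $i=0$ case; everything else follows directly from the defining relations and the explicit descriptions of $N_i$ and $K_i$. Thus the expected main obstacle is essentially a one-line manipulation of Brauer relations, after which the coset-decomposition framework does all the work.
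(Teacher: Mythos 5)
Your proposal is correct and follows essentially the same route as the paper: decompose $r=ah$ with $a\in D_i$, $h\in N_i$, push $h$ across $e_i$ using the explicit Klein four-group generators of $N_i$ from Lemma \ref{N_i}, and check that $e_0$ absorbs all of $N_0$ (so $K_0=\langle 1\rangle$) while $r_0r_1r_0r_1r_0$ merely commutes past $e_1$ into $K_1$. Your direct rewrite of $e_0r_1r_0r_1r_0r_1=e_0$ via (\ref{c7.0.1.10}), (\ref{c7.0.1.4}) and (\ref{c7.0.1.3}) is a valid substitute for the paper's appeal to Lemma \ref{c7.2} and the anti-involution.
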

\begin{proof}From the proof of Lemma \ref{c7.N_i}, by Lemma \ref{c7.2} and Proposition \ref{prop:opp}, we see that
\begin{eqnarray*}r_1r_0r_1r_0r_1e_0&=&e_0,\\
r_0r_1r_0r_1r_0e_1&=&e_1r_0r_1r_0r_1r_0.
\end{eqnarray*}
Therefore our lemma holds by the analogous argument in Proposition \cite[Lemma 4.8]{CL2011},
\cite[Lemma 4.2]{L2014}, and \cite[Lemma 6.5]{CLY2010} through writing an element as the product of one element in $D_i$ and another in $N_i$.
\end{proof}

\begin{lemma}\label{c7.anye}
For any $\beta\in \Psi^+$ and  $i\in \{0, 1 \}$,
there exist $a_1$ $a_2\in W(\ddG_2)$, $j\in \{0,1 \}$ and $t\in \Z$ such that
$$e_\beta e_i=\delta^{t}a_1e_ja_2.$$
\end{lemma}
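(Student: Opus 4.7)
The plan is to reduce the statement to a finite case analysis using the defining formula $e_\beta = w e_j w^{-1}$ and the rewriting tool supplied by Lemma~\ref{c7.anyr}. First, since every positive root of type $\ddG_2$ lies in the $W(\ddG_2)$-orbit of one of the two simple roots $\beta_0$ or $\beta_1$, I can write $\beta = w\beta_j$ for some $w \in W(\ddG_2)$ and some $j \in \{0,1\}$, whence $e_\beta = w e_j w^{-1}$ by (\ref{genebeta}), where Lemma~\ref{N_i} guarantees that this is well-defined. Consequently $e_\beta e_i = w e_j w^{-1} e_i$.

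Next I apply Lemma~\ref{c7.anyr} to the element $w^{-1}$: there exist $a' \in D_i$ and $b' \in K_i$ such that $w^{-1} e_i = a' e_i b'$. Substituting, we get
\begin{equation*}
e_\beta e_i = w\, e_j a' e_i\, b'.
\end{equation*}
The essential remaining task is therefore to show that the middle expression $e_j a' e_i$, where $a'$ ranges over the finite set $D_i$ of coset representatives, can be brought into the desired form $\delta^t x e_k y$ with $x,y \in W(\ddG_2)$ and $k \in \{0,1\}$.

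Because $|W(\ddG_2)|=12$ and $|N_i|=4$, the set $D_i$ has only three elements for each $i \in \{0,1\}$, so a case analysis is genuinely finite. The reduction for each $a'$ proceeds by invoking the mixed relations available in $\Br(\ddG_2)$: the $\delta^2$-producing relations $e_0 r_1 e_0 = \delta^2 e_0$ (equation (\ref{c7.0.1.14})) and $e_0 e_1 e_0 = \delta^2 e_0$ (equation (\ref{c7.0.1.11})); the $\delta$-producing collapse $r_1 r_0 e_1 r_0 r_1 e_0 = \delta e_0$ (equation (\ref{c7.0.1.15})); the five-term relation $e_1 r_0 e_1 r_0 e_1 = e_1$ (equation (\ref{c7.0.1.12})); and the idempotency-type relations (\ref{c7.0.1.5})--(\ref{c7.0.1.6}). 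By commuting reflections past $e_j$ using (\ref{c7.0.1.7})--(\ref{c7.0.1.10}) and (\ref{c7.0.1.16}), the coset representative $a'$ can be pushed either to produce a closed word of the form $e_j(\cdots)e_i$ that matches one of these collapse patterns, or to be absorbed into the outer $W(\ddG_2)$-factors.

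The main obstacle, and the real work, lies in the mixed cases $(j,i) \in \{(0,1),(1,0)\}$ and in the case $(j,i)=(0,0)$, because here the relations yield genuine powers of $\delta$ and the rewriting must keep track of how the coset representative interacts with the centralizer data. For $(j,i)=(1,1)$ the control provided by equation (\ref{c7.0.1.12}) makes the reduction routine. To organise the case analysis cleanly, I would fix explicit representatives of $D_0$ and $D_1$ (for instance $\{1,\, r_1,\, r_1 r_0\}$ or similar, adjusted to lie in the cosets of the stabilizers identified in Lemma~\ref{N_i}), and for each pair $(j,a',i)$ display the reduction chain explicitly; the output $x e_k y$ is then repackaged into $a_1 e_j a_2$ by letting $a_1 = wx$ and $a_2 = y b'$, which completes the proof.
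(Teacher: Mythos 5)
Your approach is essentially the paper's: both reduce the claim to a finite case analysis over the twelve products $e_\beta e_i$ (six positive roots times two generators) and discharge each case with the mixed relations (\ref{c7.0.1.14}), (\ref{c7.0.1.11}), (\ref{c7.0.1.15}), (\ref{c7.0.1.12}), (\ref{c7.0.1.9}), (\ref{c7.0.1.10}) and (\ref{c7.0.1.16}). The paper skips your Lemma~\ref{c7.anyr} preprocessing and simply writes out $\Psi^+=\{\alpha_0, r_1\alpha_0, r_0r_1\alpha_0, \alpha_1, r_0\alpha_1, r_1r_0\alpha_1\}$, then computes all twelve products explicitly; your factorization $w^{-1}e_i=a'e_ib'$ is harmless but buys nothing, since it lands you in the same twelve cases. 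The one substantive caveat is that your write-up stops exactly where the paper's proof consists entirely of content: the explicit reduction chains for each $(j,a',i)$ are the whole lemma, and promising to "display the reduction chain explicitly" is not the same as verifying that each case actually closes up into the form $\delta^t a_1 e_j a_2$ --- to be a complete proof you must carry out those dozen computations.
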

\begin{proof} It is known that
$$\Psi^+=\{\alpha_0, r_1\alpha_0, r_0r_1\alpha_0, \alpha_1, r_0\alpha_1, r_1r_0\alpha_1 \}.$$

We verify the lemma by distinguishing all possible cases.
\begin{eqnarray*}
e_0e_0&=&\delta^3 e_0,\\
r_1e_0r_1e_0&\overset{(\ref{c7.0.1.14})}{=}&\delta r_1e_0,\\
r_0r_1e_0r_1r_0e_0&\overset{(\ref{c7.0.1.4})+(\ref{c7.0.1.14})}{=}&\delta r_0r_1e_0,\\
e_1e_0&\overset{(\ref{c7.0.1.9})}{=}& r_0r_1e_0,\\
r_0e_1r_0e_0&\overset{(\ref{c7.0.1.9})+(\ref{c7.0.1.4})}{=}& r_0r_1e_0,\\
r_1r_0(e_1r_0r_1e_0)&\overset{(\ref{c7.0.1.15})}{=}& \delta e_0,\\
e_0e_1&\overset{(\ref{c7.0.1.10})}{=}& e_0r_1r_0,\\
r_1e_0r_1e_1&\overset{(\ref{c7.0.1.10})+(\ref{c7.0.1.4})}{=}& r_1 e_0r_1r_0,\\
r_0r_1(e_0r_1r_0e_1)&\overset{(\ref{c7.0.1.10})+(\ref{c7.0.1.6})}{=}&\delta r_0r_1e_0r_1r_0,\\
e_1e_1&=&\delta e_1,\\
r_0e_1r_0e_1&\overset{(\ref{c7.0.1.16})}{=}& r_1r_0r_1r_0 e_1,\\
r_1r_0e_1r_0r_1e_1&\overset{(\ref{c7.0.1.16})+(\ref{c7.0.1.4})}{=}&r_0r_1r_0 e_1.
\end{eqnarray*}
\end{proof}

\begin{lemma}\label{c7.rewrittenforms}
 Each element in $\BrM(\ddG_2)$ can be written in one of the following normal forms
\begin{enumerate}[(i)]
 \item $\delta^k u e_i v w$  or
\item  $\delta^k a$
\end{enumerate}
where $i=0$ or $1$,  $u\in D_i$, $v\in K_i$ (Lemma \ref{c7.anyr}), $w\in D_i^{\rm op}$, $k\in \Z$, $a\in W(\ddG_2)$.
In particular,  $\Br(\ddG_2)$ is spanned by $39$ elements over $\Z[\delta^{\pm 1}]$.
\end{lemma}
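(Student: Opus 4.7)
The plan is to reduce an arbitrary monomial in $\BrM(\ddG_2)$ to one of the two stated normal forms by induction on the number $m$ of Temperley-Lieb generators appearing. Since the submonoid generated by $r_0, r_1$ is (by the first lemma of this section) the Weyl group $W(\ddG_2)$, any monomial can be written as $\delta^k w_0 e_{i_1} w_1 e_{i_2} \cdots e_{i_m} w_m$ with $k \in \Z$ and each $w_j \in W(\ddG_2)$. When $m=0$, the element is already of form (ii), and there are at most $|W(\ddG_2)| = 12$ such words modulo powers of $\delta$.

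The inductive step collapses two adjacent $e$'s. Given a segment $e_{i_1} w_1 e_{i_2}$ with $m \ge 2$, I rewrite it as
\[
e_{i_1} w_1 e_{i_2} \;=\; w_1 \, (w_1^{-1} e_{i_1} w_1) \, e_{i_2} \;=\; w_1 \, e_\beta \, e_{i_2},
\]
where $\beta = \pm w_1^{-1}\beta_{i_1} \in \Psi^+$ (chosen positive using $e_\beta = e_{-\beta}$, well defined thanks to Lemma \ref{c7.N_i}). Applying Lemma \ref{c7.anye} gives $e_\beta e_{i_2} = \delta^t a_1 e_j a_2$ with $a_1,a_2 \in W(\ddG_2)$ and $j \in \{0,1\}$. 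Substituting into the full expression absorbs everything back into adjacent $W$-factors and reduces the $e$-count by one. Iterating, we arrive at $m=1$.

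It remains to treat an expression $\delta^{k'} u' e_i v'$ with $u',v' \in W(\ddG_2)$. Here I first apply the op-dual of Lemma \ref{c7.anyr} (available via the anti-involution of Proposition \ref{c7.prop:opp}, noting $K_i^{\op}=K_i$ since the generators of $K_i$ are involutions) to the right half: $e_i v' = \bar v\, e_i\, w$ with $\bar v \in K_i$ and $w \in D_i^{\op}$. The element becomes $\delta^{k'} (u'\bar v) e_i w$. A second application of Lemma \ref{c7.anyr} to $(u'\bar v) e_i$ writes it as $u e_i v$ with $u \in D_i$ and $v \in K_i$, yielding the normal form (i). For the span estimate I use $|W(\ddG_2)|=12$, $|D_0|=|W|/|N_0|=3$, $|K_0|=1$, $|D_1|=3$, $|K_1|=2$, giving at most
\[
12 + |D_0|\,|K_0|\,|D_0^{\op}| + |D_1|\,|K_1|\,|D_1^{\op}| \;=\; 12 + 9 + 18 \;=\; 39
\]
spanning elements over $\Z[\delta^{\pm 1}]$.

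The only delicate step is the collapse $e_{i_1} w_1 e_{i_2} \mapsto \delta^t(\cdot) e_j (\cdot)$: one needs that every product $e_\beta e_i$ really does reduce to a single Temperley-Lieb generator times $W$-elements and a $\delta$-power, which is precisely the content of Lemma \ref{c7.anye} and is verified there by a case check across the six positive roots. Combined with $e_\beta = e_{-\beta}$ to handle signs, this makes the induction go through with no sign or well-definedness ambiguity.
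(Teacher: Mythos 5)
Your proposal is correct and follows essentially the same route as the paper: the key collapse $e_{i_1}w_1e_{i_2}=w_1e_\beta e_{i_2}$ reduced via Lemma \ref{c7.anye}, followed by normalization with Lemma \ref{c7.anyr} and its $\op$-dual, is exactly the paper's mechanism, and the count $12+3^2\cdot 1+3^2\cdot 2=39$ agrees. The only cosmetic difference is that you phrase the argument as induction on the number of Temperley--Lieb generators, whereas the paper phrases it as closure of the set of normal forms under multiplication; these are interchangeable.
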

\begin{proof}It suffices to prove that  the set of the normal forms is closed under
 multiplication.
By Proposition \ref{c7.prop:opp} and Lemma \ref{c7.anyr}, the product of two elements of the normal forms  are still in the set of normal forms when one of the  two elements
belongs to the second forms.
 If two elements $\delta^{k_1} u_1 e_i v_1 w_1$, $\delta^{k_2} u_2 e_j v_2 w_2$
 are in the  first forms, we have
 \begin{eqnarray*}
 &&\delta^{k_1} u_1 e_i v_1 w_1\delta^{k_2} u_2 e_j v_2 w_2\\
 &=&\delta^{k_1+k_2}u_1v_1 w_1 u_2((v_1 w_1 u_2)^{-1}  e_i v_1 w_1 u_2) e_j v_2 w_2\\
 &\in&\delta^{k} W(\ddG_2)e_{t}W(\ddG_2) \quad \quad \mbox{Lemma \ref{c7.anye}}.
 \end{eqnarray*}
 Next we can apply  Proposition
\ref{c7.prop:opp} and Lemma \ref{c7.anyr} to conclude  that the product is a normal form.\\
The last claim follows from
$$\#(W(\ddG_2))+\#(D_0)^2\#(K_0)+\#(D_1)^2\#(K_1)=12+3^2\cdot 1 +3^2\cdot 2=39.$$
\end{proof}
\section{The algebra $\SBr(\ddD_4)$}
\label{sectiionSBrddD4}
Now  finish the proof of Theorem \ref{c7.main} by proving the surjectivity  of $\phi$.
\begin{proof}First  analogous to  \cite[Section 5]{L20132}, it can be checked that the image $\phi(\Br(\ddG_2))$ is free of rank $39$ by the normal forms in
Lemma \ref{c7.rewrittenforms} and  by the normal forms of  $\Br(\ddD_4)$ in Theorem \ref{thm:genralwriting} as follow.
\begin{enumerate}[(I)]
\item The  restriction of $\phi$ to normal forms in ${\rm (ii)}$ of  Lemma \ref{c7.rewrittenforms} is injective thanks to the embedding of $W(\ddG_2)$ to $W(\ddD_4)$.
\item Since there are three elements in   the $\phi(W(\ddG_2))$-orbit  of $\{\alpha_1,\alpha_2,\alpha_4\}^{\rm cl}$;
then the $\phi$ on normal forms in ${\rm (i)}$ of  Lemma \ref{c7.rewrittenforms}  with $i=0$ is an injective map to the normal forms in Theorem \ref{thm:genralwriting} associated to $Y=\{1,2,4\}$.
\item Since there are three elements in   the $\phi(W(\ddG_2))$-orbit  of $\{\alpha_3\}$ and we have
$1\neq \hat{E}_3\phi(r_0r_1r_0r_1r_0)\hat{E}_3\in W(M_{\{3\}})$ (by the diagram representation in \cite{CGW2008});
then
the $\phi$ on normal forms in ${\rm (i)}$ of  Lemma \ref{c7.rewrittenforms}  with $i=1$ is an injective map to the normal forms in Theorem \ref{thm:genralwriting} associated to $Y=\{3\}$.
\end{enumerate}
We know the admissible root  sets $Y$ of type $\ddD_4$ involved for the normal forms in Theorem \ref{thm:genralwriting}  are
$$\emptyset, \{3\},\{1,2\},\{1,4\},\{1,2,4\}.$$
If $a\in \SBrM(\ddD_4)$, we see that the monoid actions  of $a$ and $a^{\op}$ on $\emptyset$ are $\sigma$-invariant.
Because there is no $\sigma$-invariant
element in the $W(\ddD_4)$-orbits of $\{\alpha_1,\alpha_2\}$ and $\{\alpha_1,\alpha_4\}$,
the only possible $Y$ for normal forms of elements in  $ \SBrM(\ddD_4)$ in Theorem \ref{thm:genralwriting} are
$$\emptyset, \{3\},\{1,2,4\}.$$
Therefore applying Theorem \ref{thm:genralwriting},  we can verify that $\SBr(\ddD_4)$ is exactly equal to $\phi(\Br(\ddG_2))$ described  at the beginning of the proof and of rank $39$ as in \cite[Section 5]{L2013}. Namely, by Theorem \ref{thm:genralwriting}(\cite[Theorem 2.7]{CW2011}),
 we write each monoid in the sandwich form $\delta^{i} a_{B} \hat{E}_Y h a_{B'}^{\rm op}$, because we select the $Y(t)$ $\sigma$-invariant for $t=0$, $1$, $3$, and we just need to find those sandwich forms with $a_{B}$,  $h$, $a_{B'}^{\rm op}$ which are  $\sigma$-invariant, which implies that $B$, $h$, $B'$  are $\sigma$-invariant, and we see those $\sigma$-invariant sandwich forms are exactly the $\SBrM(\ddD_4)$ and the image of $\phi(\BrM(\ddG_2))$. This accomplishes the proof of Theorem \ref{c7.main}.
\end{proof}
Now we have the decomposition for $\Br(\ddG_2)$  as $\Z[\delta^{\pm 1}]$-module as follows,
\begin{eqnarray*}
\Br(\ddG_2)=\Br(\ddG_2)/(r_1r_0e_1r_0r_1)\oplus(r_1r_0e_1r_0r_1)/(e_0r_1r_0e_1r_0r_1)\oplus(e_0r_1r_0e_1r_0r_1).
\end{eqnarray*}
Similar to the arguments in  \cite{BO2011} or \cite[Section 6]{L2013}, with the conclusion about cellularity of Hecke algebra of
type $\ddG_2$ in \cite{G2007}, the  theorem below  about the cellularity  follows.
\begin{thm} If $R$ is a field with characteristic not equal to $2$ or $3$, then
$\Br(\ddG_2)\otimes R$  is a cellularly stratified algebra.
\end{thm}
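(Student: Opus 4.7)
The plan is to use the displayed three-term decomposition of $\Br(\ddG_2)$ as the chain of two-sided ideals required for a cellularly stratified structure in the sense of \cite{BO2011}. Set
$$J_0 = 0,\quad J_1 = (e_0 r_1 r_0 e_1 r_0 r_1),\quad J_2 = (r_1 r_0 e_1 r_0 r_1),\quad J_3 = \Br(\ddG_2),$$
so that the subquotients $J_3/J_2$, $J_2/J_1$, $J_1/J_0$ are exactly the three summands appearing in the displayed decomposition. The task is then to verify (i) each subquotient $J_k/J_{k-1}$ has the form of a generalized matrix algebra $V_k \otimes_R V_k^{\op} \otimes_R B_k$ with $B_k$ cellular, (ii) the anti-involution $\op$ of Proposition \ref{c7.prop:opp} restricts appropriately on each layer, and (iii) the idempotents governing each stratum satisfy the sandwich axiom.

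First I would treat the top layer $J_3/J_2$. Modding out by $J_2$ annihilates every monomial that involves $e_0$ or $e_1$, because the normal forms in Lemma \ref{c7.rewrittenforms}(i) all contain one of these generators and, via the relations, can be pushed into $(r_1r_0e_1r_0r_1)$ or its $\op$-dual. This should identify $J_3/J_2$ with the Iwahori--Hecke algebra of type $\ddG_2$ (specialized at the appropriate parameter coming from $\delta$). The required cellularity then follows directly from \cite{G2007}, provided the characteristic of $R$ is not $2$ or $3$; this is where the characteristic hypothesis enters via the bad primes of $\ddG_2$.

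Next I would handle the strata $J_2/J_1$ and $J_1/J_0$. Each corresponds to one of the two $W(\ddG_2)$-orbits of admissible sets in Proposition \ref{c7.rem:ad}. For $J_2/J_1$, attached to the orbit of $\{\beta_1\}$, the normal forms are $\delta^k u e_1 v w$ with $u \in D_1$, $w \in D_1^{\op}$, and $v \in K_1 \cong \Z/2\Z$; this packages as $V_1 \otimes V_1^{\op} \otimes B_1$ with $V_1 = R^{D_1}$ of rank $3$ and $B_1$ the group algebra of $K_1$, cellular over $R$ exactly when $\mathrm{char}\,R \neq 2$. For $J_1/J_0 = J_1$, attached to the orbit of $\{\beta_0, 3\beta_0+2\beta_1\}$, the centralizer $K_0$ is trivial, so the stratum is simply $V_0 \otimes V_0^{\op}$ of rank $9$, i.e.\ a matrix algebra, which is trivially cellular. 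The rank count $12 + 9 \cdot 1 + 9 \cdot 2 = 39$ matches the total and confirms no spillover between strata.

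The main obstacle will be step (iii): showing that the idempotents $\hat e_0 = \delta^{-3} e_0$ and $\hat e_1 = \delta^{-1} e_1$ genuinely cut out the strata and that each sandwich $\hat e_i \Br(\ddG_2) \hat e_i$ coincides with the cellular algebra $B_i$ of the corresponding stratum, rather than a larger piece mixing with lower strata. This reduces to the identities in Lemma \ref{c7.2} and Lemma \ref{c7.anye} describing multiplication across strata: in particular $e_0 e_1 e_0 = \delta^2 e_0$ and $r_1 r_0 e_1 r_0 r_1 e_0 = \delta e_0$ must be read as saying that any product dropping to a lower stratum lies in the next ideal down, so the quotient structure is clean. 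Once these sandwich identifications are in place, the axioms of \cite{BO2011} are satisfied layer by layer, and the anti-involution $\op$ from Proposition \ref{c7.prop:opp} provides the compatible cellular anti-involution, completing the cellular stratification.
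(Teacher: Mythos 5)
Your plan follows the paper's own (very terse) argument: the same three-term ideal chain $0\subset(e_0r_1r_0e_1r_0r_1)\subset(r_1r_0e_1r_0r_1)\subset\Br(\ddG_2)$ from the displayed decomposition, identification of the top quotient with the group/Hecke algebra of type $\ddG_2$ handled by \cite{G2007}, the two lower strata read off from the normal forms of Lemma \ref{c7.rewrittenforms}, and assembly via the framework of \cite{BO2011}. Two cosmetic quibbles: the top layer is the group algebra $R[W(\ddG_2)]$ (the Hecke algebra at $q=1$; no parameter comes from $\delta$ since $r_i^2=1$), and $R[\Z/2\Z]$ is cellular in every characteristic, so the hypothesis $\mathrm{char}\,R\neq 2,3$ enters through the $W(\ddG_2)$ stratum and the stratification axioms rather than through cellularity of $B_1$.
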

\begin{rem}\label{rem:compaIG}
From \cite{L2014}, we know that $\Br(\ddI_2^6)$ is of rank $2\cdot 6 +3/2\cdot 6^2=66$.  We  see that
$\{\beta_0\}$ is an admissible root set of type $\ddI_2^6$, which is not true for type $\ddG_2$. This is   the reason for the rank difference between  $\Br(\ddI_2^6)$ and $\Br(\ddG_2)$. If $\delta=1$, there exists a surjective homomorphism $\varphi:\Br(\ddI_2^6)\rightarrow\Br(\ddG_2)$ determined by $\varphi(r_i)=r_{1-i}$ and $\varphi(e_i)=e_{1-i}$, for $i=0$, $1$.
 \end{rem}
{\bf Acknowledgement} At the end of the paper, I appreciate Prof A.~Cohen for his supervision for my PhD thesis on Brauer algebras of
non-simply laced type, and say thanks to  Prof E.~Opdam for his support in my postdoc research. Most of the paper is included in the author's thesis
(\cite{Lthesis}), and the author now is supported by the Fundamental Research Funds of Shandong University.

Shoumin Liu\\
Email: s.liu@sdu.edu.cn\\
Taishan College, Shandong University\\
Shanda Nanlu 27, Jinan, \\
Shandong Province, China\\
Postcode: 250100

%
\end{document}